\providecommand{\U}[1]{\protect\rule{.1in}{.1in}}
\newtheorem{counter}{Counter}[section]
\newtheorem{theorem}[counter]{Theorem}
\newtheorem{lemma}[counter]{Lemma}
\newtheorem{proposition}[counter]{Proposition}
\theoremstyle{remark}
\newtheorem{remark}[counter]{Remark}
\newcommand{\x}{\mathbf{x}}
\newcommand{\1}{\mathbf{1}}
\newcommand{\Prob}{\mathbb{P}}
\newcommand{\Rd}{\mathrm{d}}
\newcommand{\tubo}{T_{\veps}^+}
\newcommand{\tubno}{T_{\veps_n}^+}
\newcommand{\tubi}{T_{\veps}^-}
\newcommand{\tubni}{T_{\veps_n}^-}
\newcommand{\R}{\mathbb{R}}
\newcommand{\NN}{\mathbb{N}}
\newcommand{\E}{\mathbb{E}}
\newcommand{\cc}{C}
\newcommand{\D}{D}
\newcommand{\dom}{D}
\newcommand{\te}{\textrm}
\newcommand{\veps}{\varepsilon}
\newcommand{\GP}[1]{\mathrm{GPer}_{n,\veps_n}(#1)}
\newcommand{\GPe}[1]{\GPem_{n,\veps_n}(#1)}
\newcommand{\NLTVe}[1]{ \Pem_{\veps_n}( #1) }
\newcommand{\NLTVenon}[1]{ \Pem_{\veps}( #1) }
\newcommand{\NLTVen}[1]{ \Pem_{\veps_n}( #1) }
\newcommand{\BV}[1]{ \Pem(#1) }
\newcommand{\red}{\color{red}}
\newcommand{\nc}{\normalcolor}
\DeclareMathOperator{\Var}{Var}
\DeclareMathOperator{\Pem}{Per}
\DeclareMathOperator{\GPem}{GPer}
\DeclareMathOperator{\dist}{dist}
\DeclareMathOperator{\divergence}{div}
\DeclareMathOperator{\error}{std}
\DeclareMathOperator{\cut}{Cut}
\begin{document}

\title{Estimating perimeter using graph cuts}
 
\author{ Nicol\'as Garc\'ia Trillos$^1$, Dejan Slep\v{c}ev$^2$ and James von Brecht$^3$}

\address{$^1$ Division of Applied Mathematics, Providence, RI, 02912, USA.\\ 
email:  nicolas\_garcia\_trillos@brown.edu}

\address{$^2$
Department of Mathematical Sciences, Carnegie Mellon University, Pittsburgh, PA, 15213, USA. \\
tel. +412 268-2545, 
email: slepcev@math.cmu.edu }

\address{$^3$ Department of Mathematics and Statistics, California State University, Long Beach
Long Beach, CA 90840, USA. \\
email: James.vonBrecht@csulb.edu
}

\keywords{perimeter, nonparametric estimation, graph cut,   point cloud, random geometric graph,  
concentration inequality}
% 60D05     Geometric probability and stochastic geometry
 % --- 60F05: Central limit and other weak theorems 
% 68R10  		Discrete mathematics in relation to computer science: Graph theory
 % 62G20: Nonparametric inference: Asymptotic properties
\newcounter{broj1}
\date{\today}
\maketitle

\begin{abstract}
We investigate the estimation of  the perimeter of a set by a graph cut of 
a random geometric graph. For $\Omega \subset D = (0,1)^d$, with $d \geq 2$, we are given
 $n$ random i.i.d. points on $D$ whose membership in $\Omega$ is known. 
 We consider the sample as a random geometric graph with connection distance $\veps>0$.
We estimate the perimeter of $\Omega$ (relative to $D$) by the, appropriately rescaled,  graph cut between the vertices in  $\Omega$ and the vertices in $D \backslash \Omega$.
We obtain bias and variance estimates on the error, which are optimal in scaling with respect to $n$ and $\veps$. We consider two scaling regimes: the dense (when the average degree of the vertices goes to $\infty$) and the sparse one (when the degree goes to $0$). 
In the dense regime there is a crossover in the nature of approximation at dimension $d=5$: we show that in low dimensions $d=2,3,4$ one can obtain confidence intervals for the approximation error, while in higher dimensions one can only obtain error estimates for testing the hypothesis that the perimeter is less than a given number. 
\end{abstract}

% \begin{keywords}
% graph cut, graph perimeter, cut capacity,  point cloud, random geometric graph,  
% concentration inequality, discrete to continuum limit.\end{keywords}

\section{Introduction}

This paper investigates the use of random-graph cuts to obtain empirical estimates of the perimeter of a domain $\Omega \subset D := (0,1)^d$ for $d\geq 2$. % with a possibly non-smooth boundary. 
Let $\x_1, \dots, \x_n,\ldots$ denote a sequence of independent random points uniformly distributed on the unit cube $D$  and let $V_n:=\left\{\x_1, \dots, \x_n \right\}$. 

The problem of estimating the perimeter of $\Omega$ based on knowing which points of $V_n$ belong to $\Omega$ is a classical question, see \cite{ ArCuFr09, CuFrGy13, CuFrRo07, JinYuk11, Neeman, PatRod08} for recent contributions, and see Subsection \ref{Discussion} below which contains a discussion about related work.  Here we consider an estimator of the perimeter that is based on a geometric graph constructed from the point cloud $V_n$. More precisely, we select $\veps_n>0$, and connect two points in the cloud if they are within distance $\veps_n$ of each other; then we consider an appropriately scaled `cut' determined by the number of edges in the graph that connect points in $\Omega$ with points that belong to $\Omega^c$. This type of estimator is natural to consider, since graph cuts arise as a discretization of the perimeter in many applications such as clustering \cite{ACPP,  BLUV12, HagKah, HeinSetz, KanVemVet04, ShiMalik,  SzlamBresson,  vonLux_tutorial,  WeiChe89}. Our choice of the estimator is therefore based on its use in various statistical and machine learning applications.

We focus on estimating the approximation error of the perimeter of an arbitrary (but fixed) set $\Omega$; the error estimates that we obtain are uniform on a class of sets where certain geometric quantities are controlled (see Remark \ref{remarkUniform} below). One of the important features of our estimator is that it has a small bias. Indeed, the expectation of our estimator provides a second order approximation (in terms of the natural parameter $\veps_n$) of the true perimeter of $\Omega$ under some regularity conditions on the boundary of the set (see \eqref{bias1} below); this turns out to be a sharp estimate for the bias. We also obtain precise estimates for the variance of the estimator; for these estimates to hold, we do not need any regularity assumptions on the boundary of the set $\Omega$ except that it has finite perimeter in the most general sense. Furthermore, we show that our estimator converges a.s. for remarkably sparse graphs (and indeed in settings which are 
sparser than for previously considered estimators). The a.s. convergence holds 
with no regularity assumptions on $\Omega$ (other than the fact that it has finite perimeter in the 
most general sense). Finally, assuming some smoothness on the boundary of $\Omega$, we are able to establish (in the dense graph regime $\frac{1}{n^{1/d}} \ll \veps \ll1$) the asymptotic distribution of the error.
These estimates lead to asymptotic confidence intervals (which we refer to simply as \emph{confidence intervals}) and bounds on the type I and type II errors for hypothesis tests associated to the perimeter of a set. 
\nc
\bigskip

\subsection{Set-up and main results}
Let us now be more precise about the setting we consider in this paper. We consider random geometric graphs with vertex set $V_n$ and radius $\veps_n >0$. That is, graphs where
 $\x_i$ and $\x_j$ are connected by an edge if  $||\x_i- \x_j||\leq \veps_n$. The \emph{graph cut} between $A \subseteq V_n$ and $A^c$ is given by 
$$ \cut_{\veps_n}(A,A^{c}) := \sum_{\x_i \in A}\sum_ {\x_j \in V \setminus A} \; \1_{ \{ \|\x_i - \x_j\| \leq \veps_n \} }.$$
%so that $\cut_{\veps_n}(A,A^{c})$ simply counts the number of edges in the random geometric graph with one vertex in $A$ and the other vertex in its complement. 
We define the \emph{graph perimeter} as a rescaling of the graph cut: For any $\Omega \subseteq D$
\begin{equation}
\GPe{\Omega} := \frac{2 }{n (n-1) \veps^{d+1}_n} \cut(V_n \cap \Omega, V_n \cap \Omega^c).
\label{RescaledGraphPerimeter}
\end{equation}
See Figure 1 for an illustration of this construction.
The scaling is chosen so that $\GPe{\Omega}$ becomes a consistent estimator for the true (continuum) perimeter.
\begin{figure}[!h]
\begin{center}
{\includegraphics[width=7cm]{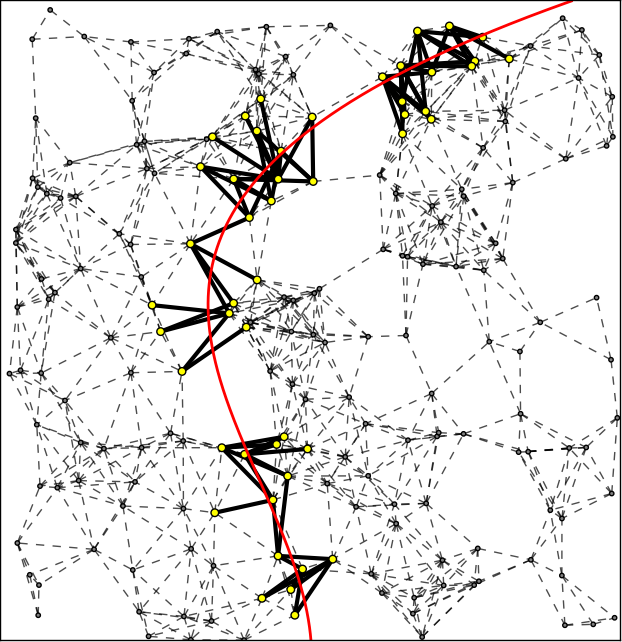}
\put(-180,85){\red \LARGE $\Omega$}
\put(-66,85){\red \LARGE $\Omega^c$}}
\caption{Edges  of the cut between $\Omega$ and $\Omega^c$ are represented by bold lines, while other edges are dashed lines. Total number of vertices is $n=200$ and connectivity radius $\veps=0.13$.
}
\end{center}
\label{fig:cut1}
\end{figure}

%\begin{figure}[!h]
%\includegraphics[width=8cm]{pic/intro6_n20_eps03.png} 
%\caption{$n=20$ and $\veps=0.3$ \red{(Remark: There are 5 other available pictures for this -- just uncomment in the tex file to see them)} } \label{fig:cut}
%\end{figure}

%\begin{figure}[!h]
%\includegraphics[width=8cm]{pic/intro5_n40_eps02.png} 
%\caption{$n=40$ and $\veps=0.2$   \red{(Remark: There are 5 other available pictures for this -- just uncomment in the tex file to see them)} } \label{fig:cut}
%\end{figure}

%\begin{figure}[!h]
%\includegraphics[width=7cm]{pic/intro4_n60_eps015.png}  \caption{$n=60$ and $\veps=0.15$. } \label{fig:cut}
%\end{figure}

%\begin{figure}[!h]
%\includegraphics[width=8cm]{pic/intro3_n100_eps015.png} 
%\caption{$n=100$ and $\veps=0.15$ } \label{fig:cut}
%\end{figure}
%
%\begin{figure}[!h]
%\includegraphics[width=8cm]{pic/intro2_n200_eps01.png} 
%\caption{$n=200$ and $\veps=0.1$ } \label{fig:cut}
%\end{figure}
%
%\begin{figure}[!h]
%\includegraphics[width=8cm]{pic/intro1_n200_eps01.png} 
%\caption{$n=200$ and $\veps=0.1$ } \label{fig:cut}
%\end{figure}

One of our interests  lies in determining how well does $\GP{\Omega}$ estimate the relative perimeter 
of $\Omega$ in $D$. We first investigate for which scaling of $\veps_n$ on $n$ does the convergence hold
almost surely as $n \to \infty$. 
In other words, we want to understand the relation between edge-sparsity of random geometric graphs and point-wise convergence of the graph perimeter to the continuum perimeter in the almost sure sense. 
We consider this question for a very broad family of sets $\Omega \subset \dom$, which are only assumed to have finite relative perimeter in $D$ in the general sense of \cite{AFP}. 
That is, we define the relative perimeter of $\Omega$ with respect to $\dom$ to be
\begin{equation} \label{perimeter}
\BV{\Omega} =  \sup \left\{  \int_{\Omega} \divergence(v) \; \Rd x \: : \; (\forall x \in D ) \;\,  \| v(x)\| \leq 1, \: \:  \:  v \in C^\infty_c(\dom, \R^d)   \right\}.
\end{equation}
If $\Omega$ has a smooth relative boundary then $\BV{\Omega}$ is nothing but the surface area of 
$\partial \Omega \cap \dom$. We remark that the notion of the perimeter we use is more general than the notion of Hausdorff measure of the boundary, $\mathcal H^{d-1}(\partial \Omega \cap D)$, and than the Minkowski content, which are the ones more typically used in the statistics literature \cite{ArCuFr09, CuFrGy13, CuFrRo07}. In
particular, as we see below, we work with consistent nonparametric estimators in the most general setting available.

%$$ \BV{\Omega}= \int_{\partial \Omega \cap \dom } \; \Rd \mathcal S^{d-1},  $$ where $\mathcal{S}^{d-1}$ represents the surface area measure. In this case we deduce that $\BV{\Omega}$ coincides with the surface area of  $\partial \Omega \cap \dom$ for smooth sets. 

As we recall below in (\ref{eq:tri_mean} --\ref{MeanConvergingToPerimeter}), it is known that when  $\veps_n \to 0$ as $n \to \infty$ then the bias of the estimator vanishes in the limit: 
\begin{equation} \label{temp1}
\E(\GP{\Omega})  \to \sigma_{d} \BV{\Omega} \qquad \te{as } \qquad n \to \infty.
 \end{equation}
The scaling factor $\sigma_{d}$ satisfies 
\begin{equation} \label{sigma}
 \sigma_{d}:= \int_{||z|| \leq 1} |z_1| \;\Rd z = \frac{2s_{d-2} }{(d+1)(d-1)},
\end{equation}
where $z_1$ denotes the first component of the vector $z \in \mathbb{R}^{d}$ and 
$s_{d-2}$ is the area of the $(d-2)$-dimensional unit sphere (the boundary of the unit ball in $\R^{d-1}$).
We refer to the normalizing quantity $\sigma_{d}$ as the \emph{surface tension}.

We obtain the following estimates on the deviation of the graph perimeter $\GP{\Omega}$ from its mean.
Let
\begin{equation}\label{eq:rate_def}
f(n, \veps_n) :=
\begin{cases}
\frac{1}{\sqrt{n \veps_n}}  \;  &  \te{if } \;\frac{1}{n^{1/d}} \leq \veps_n  \medskip \\
\frac{1}{n \veps_n^{(d+1)/2}}  & \te{if } \;\frac{1}{n^{2/(d+1)}} \leq \veps_n \leq \frac{1}{n^{1/d}}.
\end{cases}
\end{equation}

%
%By using the moment method applied to $ \left| \GP{\Omega} - \E(\GP{\Omega})   \right|,$ we demonstrate that if $\veps_n \to 0 $ and $n^{2}\veps^{d+1}_n \to \infty$ then the graph perimeter $\GP{\Omega}$ converges to $\sigma_{d} \BV{\Omega}$ almost surely. One of the main results of this paper provides the moment bounds and the almost sure convergence that follows.
\begin{theorem}
Let $p \geq 1$ and let $\Omega \subseteq \D$ be a set with finite perimeter. Assume $\veps_n \to 0$ as $n \to \infty$. Then,
\begin{equation}
 \E(|\GPe{\Omega} - \E(\GP{\Omega})|^p) \leq \cc_{p,d} \, (  \max\{1,\BV{\Omega}\} f(n,\veps_n))^p
%  \begin{cases} 
%      \hfill \frac{1}{n^{p/2} \veps_n^{p/2}}    \hfill & \text{ if }  \frac{1}{n^{1/d}} \leq \veps_n  \ll 1 \\
%      \hfill \frac{1}{n^p \veps_n^{p(d+1)/2}}    \hfill & \text{ if } \frac{1}{n^{2/(d+1)}} \leq \veps_n \leq \frac{1}{n^{1/d}}   .  
%  \end{cases}
  \label{MomentsTheorem}
\end{equation}
%
%
%Suppose that $\veps_n= n^{-\alpha}$ where $0 \leq \alpha \leq \frac{2}{d+1}$. Then for every $q \geq1$ and every set $\Omega \subseteq \D$ with finite perimeter,
%\begin{equation*}
% \mathbb{E}\left[ \left| \GP{\Omega} - \E(\GP{\Omega})   \right|^q  \right]  \leq  \cc
%\begin{cases}
%\veps_n^{-\frac{q}{2}(d+1)} n^{-q} = n^{\frac{q}{2}(\alpha d + \alpha -2)}  \;  &  \te{if } \;\frac{1}{d} \leq \alpha \leq \frac{2}{d+1}  \medskip \\
% \veps_n^{-\frac{q}{2}} n^{-\frac{q}{2}} \quad \;\;\;= n^{\frac{q}{2}(\alpha -1)}  & \te{if } \;0 \leq \alpha \leq \frac{1}{d}
%\end{cases}
%\end{equation*}
where $\cc_{p,d} $ is a constant that depends only on $p$ and dimension $d$. In particular, if $n ^{-\frac{2}{(d+1)}} \ll \veps_n \ll 1$, then
$$ \GP{\Omega} \rightarrow   \sigma_{d} \BV{\Omega},  \: \te{almost surely as }  n \to \infty.   $$
\label{TheoremGeneral}
\end{theorem}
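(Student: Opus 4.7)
The strategy is to recognize $\GPe{\Omega}$ as a U-statistic of order two and exploit Hoeffding's decomposition. Define the symmetric kernel
\[
h(x,y) := \frac{1}{\veps_n^{d+1}} \1_{\{\|x-y\| \leq \veps_n\}}\bigl(\1_{\{x \in \Omega,\, y \in \Omega^c\}} + \1_{\{x \in \Omega^c,\, y \in \Omega\}}\bigr),
\]
so that $\GPe{\Omega} = \binom{n}{2}^{-1}\sum_{i<j} h(\x_i,\x_j)$. Setting $g(x) := \E h(x,\x_2)$ and $h_2(x,y) := h(x,y) - g(x) - g(y) + \E h$, Hoeffding's decomposition splits the centered estimator into an i.i.d.\ linear part $\tfrac{2}{n}\sum_i(g(\x_i) - \E g)$ and a canonical (doubly degenerate) remainder $\binom{n}{2}^{-1}\sum_{i<j} h_2(\x_i,\x_j)$, the two being uncorrelated.

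Next I would derive $L^2$ estimates for each piece in terms of $\BV{\Omega}$ and $\veps_n$. A change of variables shows that $\int h(x,y)\,dy \lesssim 1/\veps_n$ and that this integral vanishes off the $\veps_n$-tube of $\partial\Omega$; combined with a BV-type tube bound $|\{x \in \dom : \dist(x,\partial\Omega) \leq \veps_n\}| \lesssim \veps_n\, \BV{\Omega}$, this gives $\|g\|_\infty \lesssim 1/\veps_n$ and $\E(g - \E g)^2 \lesssim \BV{\Omega}/\veps_n$. For the degenerate kernel, the pointwise identity $h^2 = h/\veps_n^{d+1}$ (since $h$ is a scaled indicator) combined with \eqref{temp1} yields $\E h_2^2 \leq \E h^2 \lesssim \BV{\Omega}/\veps_n^{d+1}$. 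Rosenthal's inequality applied to the linear part, and a standard moment inequality for canonical second-order U-statistics (e.g.\ Giné--Latała--Zinn) applied to the degenerate remainder, then produce, after rescaling, leading contributions of order $(\BV{\Omega}/(n\veps_n))^{p/2}$ and $\BV{\Omega}^{p/2}/(n\veps_n^{(d+1)/2})^p$ respectively. These two contributions balance precisely at the crossover $\veps_n \sim n^{-1/d}$ that appears in the definition of $f$; a direct verification that the supremum-type subdominant terms in both moment inequalities are dominated in each regime, together with the fact that $\max\{1,\BV{\Omega}\}^p$ absorbs the $\BV{\Omega}^{p/2}$ prefactor, yields \eqref{MomentsTheorem}.

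The almost-sure convergence then follows from Borel--Cantelli. Since $\E\GPe{\Omega} \to \sigma_d \BV{\Omega}$ by \eqref{temp1}, it suffices to show $\GPe{\Omega} - \E\GPe{\Omega} \to 0$ a.s. Under $n^{-2/(d+1)} \ll \veps_n \ll 1$ the rate $f(n,\veps_n)$ decays like a fixed positive power of $n$, so Markov's inequality applied to \eqref{MomentsTheorem} with $p$ chosen large enough makes $\sum_n \Prob(|\GPe{\Omega} - \E\GPe{\Omega}| > \delta) < \infty$ for each $\delta > 0$. The most delicate step I anticipate is the tube estimate and the associated bound on $\E(g - \E g)^2$ under only the variational assumption \eqref{perimeter}: for smooth $\Omega$ this is the standard Minkowski content estimate, but in the pure BV setting it has to be extracted from the duality formula, likely via mollification or a direct comparison with the non-local perimeter $\NLTVe{\Omega}$, which itself controls the desired tube volume up to constants.
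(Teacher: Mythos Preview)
Your proposal is correct and follows essentially the same route as the paper: Hoeffding decomposition, Bernstein/Rosenthal moments for the linear part, Gin\'e--Lata{\l}a--Zinn for the canonical part, and Borel--Cantelli for the almost-sure statement. Your anticipated difficulty is real---the tube bound $|\{x:\dist(x,\partial\Omega)\le\veps_n\}|\lesssim\veps_n\,\BV{\Omega}$ is not available for general sets of finite perimeter---and the paper resolves it exactly as you guess, via mollification: Lemma~\ref{LemmaMomentsAveragephi} proves $\int_{\D}\bar\phi_{\veps_n}^p\,\Rd x\le\alpha_d^{p-1}\sigma_d\,\BV{\Omega}/\veps_n^{p-1}$ directly for smooth approximants and passes to the limit, giving the variance bound (and more) without any tube-volume estimate.
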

%\begin{remark}
The last part of the previous theorem follows from \eqref{temp1}, the moment estimates \eqref{MomentsTheorem}, Markov's inequality, and Borel-Cantelli Lemma which imply that
$$ \GPe{\Omega}- \E\left( \GPe{\Omega} \right) \rightarrow 0 \quad \text{a.s.}   $$

\begin{remark}
We note that the a.s. convergence holds for rather sparse graphs (see Figure \ref{fig:cutsp}). Namely the typical degree of a node is 
$\omega_d n \veps^d $, where $\omega_d$ is the volume of the unit ball in $d$ dimensions. When 
$n ^{-\frac{2}{(d+1)}} \ll \veps_n \ll n^{-\frac{1}{d}}$ the a.s. convergence holds, while the average degree of a vertex converges to zero. The convergence is still possible because the expected number of edges crossing $\partial \Omega$ is still a quantity converging to infinity. 
\end{remark}

\begin{figure}
\subfigure[$n=400$ and $\veps=0.045$\nc]{\includegraphics[width=60mm]{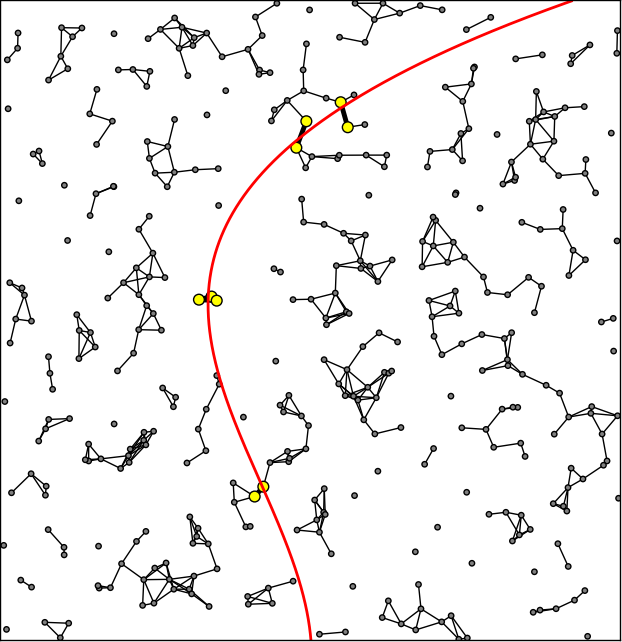}
%\put(-180,85){\blue \LARGE $\Omega$}
%\put(-66,85){\blue \LARGE $\Omega^c$}
}
\label{figureE}
\hspace*{2mm}
\subfigure[$n=1000$ and $\veps=0.027$ ]{ \includegraphics[width=60mm]{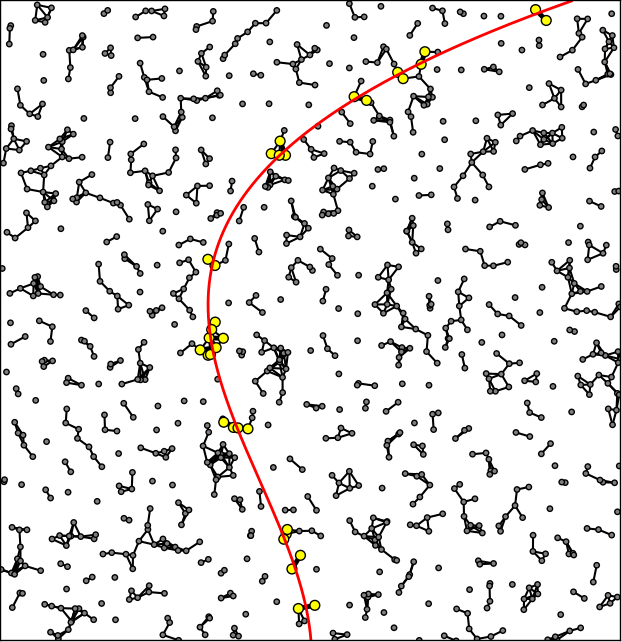}
%\put(-174,85){\blue \LARGE $\Omega$}
%\put(-50,85){\blue \LARGE $\Omega^c$}
}
\label{figureF}
\caption{Here we illustrate the ``sparse" regime when $1/n^{1/d}  \gg \veps  \gg (1/n)^{2/d+1} $ when the average degree goes to zero. Nevertheless since the number of edges in the cut still increases as $n$ does the convergence of the cut to the perimeter still holds. \nc}  
\label{fig:cutsp}
\end{figure}

\begin{remark}
Given that we show the almost-sure consistency of our estimators for any arbitrary (but fixed) set for which the perimeter is finite, our construction provides a universal strongly convergent estimator which was the desired property listed as an open problem in \cite{CuFrGy13} (for the estimator they considered).
\end{remark}

We  turn to estimating the bias of the empirical approximation:
$|\E(\GP{\Omega}) - \sigma_{d} \Pem(\Omega)| $. We first characterize the mean of the graph perimeter, $\E(\GP{\Omega})$, as the \emph{non-local perimeter} $\Pem_{\veps_n}(\Omega)$ of $\Omega,$ defined as
\begin{equation}
 \NLTVenon{\Omega}: = \frac{2}{\veps^{d+1}}\int_{\Omega} \int_{\dom \backslash \Omega} \1_{\{\|  x -y\| \leq \veps \}} \; \Rd x \Rd y . 
\label{NonLocalPerimeter}
\end{equation}
The non-local nature of the functional essentially has to do with the fact that it involves averages of finite differences as opposed to a \emph{local} approach where one considers derivatives.  We then proceed to estimate $\left|\Pem_{\veps_n}(\Omega)- \sigma_{d} \Pem(\Omega)\right|$ explicitly. It proves straightforward to check that $\left|  \Pem_{\veps_n}(\Omega)- \sigma_{d} \Pem(\Omega)\right| = O\left(\veps_n  \right)$ for general subsets $\Omega \subseteq \dom$ with smooth relative boundary. However, we show that the error is actually quadratic in $\veps_n$
\begin{equation} \label{bias1}
|\E(\GP{\Omega}) - \sigma_{d} \Pem(\Omega)| = \left|\Pem_{\veps_n}(\Omega)- \sigma_{d} \Pem(\Omega)\right| = O(\veps_n^2) 
\end{equation}
under the extra condition that $\dist ( \Omega, \partial D) >0$. This is the content of the next lemma, whose proof may be found in Appendix \ref{AppendixLemmaBias}.
\begin{lemma} \label{pnlp}
Let $\Omega$ be a set with smooth boundary, such that $\dist(\Omega ,\partial \D) >0$. Let $0 < \veps < \dist(\Omega ,\partial \D)$ and let $\NLTVenon{\Omega}$ be defined by \eqref{NonLocalPerimeter}. Then
\begin{equation}
\NLTVenon{\Omega}= \sigma_{d} \Pem(\Omega) + O(\veps^2).
\label{errorNonlocal}
\end{equation}
\end{lemma}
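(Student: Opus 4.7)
My plan is to reduce the bias to a tubular-neighborhood asymptotic expansion and to kill the would-be leading correction by an isotropy argument. Since $\veps<\dist(\Omega,\partial D)$, for every $x\in\Omega$ each $y$ with $\|x-y\|\le\veps$ lies in $D$, so the constraint $y\in D\setminus\Omega$ is equivalent to $y\in\Omega^{c}$. Changing variables $y=x+\veps z$ with $|z|\le 1$ yields
$$\NLTVenon{\Omega} = \frac{2}{\veps}\int_{B(0,1)}\mu_{\veps}(z)\,\Rd z,\qquad \mu_{\veps}(z):=\bigl|\{x\in\Omega:\,x+\veps z\notin\Omega\}\bigr|,$$
so it suffices to expand $\mu_{\veps}(z)$ up to order $\veps^{2}$ uniformly in $z\in B(0,1)$.

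Because $\partial\Omega$ is smooth and compact, there is $\delta>0$ such that $(s,t)\mapsto s+t\nu(s)$ is a diffeomorphism onto the tube $\{|d|<\delta\}$ with Jacobian $J(s,t)=1+tH(s)+O(t^{2})$, where $\nu$, $d$, $H$ denote the outward unit normal, the signed distance (positive outside $\Omega$), and the mean curvature. For $\veps$ small the set defining $\mu_{\veps}(z)$ sits inside this tube, and the Taylor expansion
$$d(x+\veps z) = t + \veps\,a(s,z) + \tfrac{\veps^{2}}{2}\,b(s,z) + O(\veps^{3}),\qquad a:=\nu(s)\cdot z,\quad b:=z^{\!\top}\nabla^{2}d(s)\,z,$$
reduces the condition $x+\veps z\notin\Omega$ for $x=s+t\nu(s)\in\Omega$ to $t\in\bigl(-\veps a-\tfrac{\veps^{2}}{2}b+O(\veps^{3}),\,0\bigr]$. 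Integrating $J$ across this interval (which is non-empty essentially when $a>0$) and then $s$ over $\partial\Omega$ gives
$$\mu_{\veps}(z) = \veps\!\int_{\partial\Omega}a_{+}\,\Rd S \;+\; \tfrac{\veps^{2}}{2}\!\int_{\partial\Omega}(b-Ha^{2})\,\1_{\{a>0\}}\,\Rd S \;+\; O(\veps^{3}),$$
the $-Ha^{2}/2$ piece coming from the linear part of $J$ meeting the leading length $\veps a$ of the $t$-interval.

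Substituting this back and dividing by $\veps$, the first summand produces $\sigma_{d}\Pem(\Omega)$ since $\int_{B(0,1)}a_{+}\,\Rd z=\tfrac12\int_{B(0,1)}|z_{1}|\,\Rd z=\sigma_{d}/2$. The crux is to show that the apparent $O(\veps)$ error from the second summand vanishes. Because $b-Ha^{2}$ is even in $z$, the indicator $\1_{\{a>0\}}$ can be replaced by $1/2$. By isotropy of $B(0,1)$, $\int_{B(0,1)}z_{i}z_{j}\,\Rd z=c_{d}\,\delta_{ij}$ with $c_{d}:=\tfrac1d\int_{B(0,1)}|z|^{2}\,\Rd z$, giving $\int_{B(0,1)}b\,\Rd z=c_{d}\operatorname{tr}(\nabla^{2}d(s))$ and $\int_{B(0,1)}a^{2}\,\Rd z=c_{d}$. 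The geometric identity $\operatorname{tr}(\nabla^{2}d(s))=H(s)$ on $\partial\Omega$ (the Hessian of a signed distance annihilates $\nabla d$ and has the principal curvatures as its remaining eigenvalues; equivalently $\Delta d=H$ on $\partial\Omega$) then forces the bracket to vanish pointwise in $s$, and \eqref{errorNonlocal} follows.

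I expect the main obstacle to be precisely this second-order geometric cancellation, without which the bias would only be $O(\veps)$. Beyond that, some bookkeeping is needed to keep the $O(\veps^{3})$ remainder in the Taylor expansion uniform in $s\in\partial\Omega$ and $z\in B(0,1)$ (using smoothness of $d$ on a fixed tube and $|z|\le 1$), and to handle the thin region $|a|\lesssim\veps$ where $\1_{\{a>0\}}$ is unstable; the latter contributes only $O(\veps^{2})$ to the bias by virtue of its $O(\veps)$ measure in $z$-space.
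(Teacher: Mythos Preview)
Your argument is correct and arrives at the same $O(\veps^{2})$ conclusion, but the organization differs from the paper's in a way worth noting. The paper first integrates over $y$ to obtain $|B_{d}(x,\veps)\cap\Omega^{c}|$, then passes to tubular coordinates in $x$; it writes the boundary locally as a graph, expands the intersection volume, and splits the resulting integrand into three pieces $\mathrm{I},\mathrm{II},\mathrm{III}$. The leading term $\mathrm{I}$ gives $\sigma_{d}/2$, while the cancellation $\mathrm{II}=\mathrm{III}+O(\veps)$ is obtained by an explicit integration by parts in the radial variable over $B_{d-1}(0,1)$. You instead integrate first over $x$ to form $\mu_{\veps}(z)$, then over $z\in B(0,1)$; your cancellation comes from the isotropy identity $\int_{B(0,1)}z_{i}z_{j}\,\Rd z=c_{d}\delta_{ij}$ combined with the geometric fact $\operatorname{tr}\nabla^{2}d=H$ on $\partial\Omega$. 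This is more conceptual: the first-order term vanishes not because two integrals happen to coincide after integration by parts, but because the second-order coefficient is proportional to $\Delta d-H$, which is identically zero on the boundary. The paper's computation is more hands-on and makes the dependence on principal curvatures explicit, which is useful for the sharpness discussion; your route is shorter and makes the mechanism transparent. Your treatment of the awkward region $|a|\lesssim\veps$ via its $O(\veps)$ measure in $z$ is adequate and matches the level of rigor in the paper's own handling of the analogous sets $A_{t}^{\veps}$ and $\{c(z)\neq 0\}$.
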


\begin{remark}
The assumption $\dist(\Omega, \partial D) >0$ in the above Lemma is needed in order to obtain bias of order $\veps^2$. If $\Omega$ touches the boundary $\partial D$ the error of order $\veps^2$ is not expected, as can be seen for example by considering the rectangle $\Omega := \left\{ x=(x_1,\dots,x_d) \in D \: : \: x_1 \leq 1/2 \right\}$, for which the error is of order $\veps$; in this situation the error is completely due to the region where $\partial \Omega$ meets $\partial D$ transversally.
Thus, in general, for $\Omega \subseteq D$ with smooth relative boundary, the bias is of order $\veps$. On the other hand, the smoothness of the boundary of $\Omega$ is only needed in the previous lemma to guarantee that curvature and its derivatives are well defined. Finally, the constant involved in the term $O(\veps^2)$ depends on the reach of the set $\Omega,$ and the intrinsic curvature of $\partial \Omega$ together with its derivatives; this can be seen from our computations in Appendix \ref{AppendixLemmaBias}.
\end{remark}

Combining the bias and variance of estimates allows us 
to obtain the rates of convergence for the error $|\GP{\Omega} - \sigma_{d} \Pem(\Omega)|$.
% by combining Theorem \ref{TheoremGeneral} with the bias estimate in a straightforward way.
In particular we estimate  the  `standard deviation'
\begin{equation*}
 \error (n):= \E\left( (\GP{\Omega} - \sigma_{d} \Pem(\Omega))^2    \right)^{1/2},
% \label{def:error}
\end{equation*}
which we may quantify precisely by using the variance-bias decomposition
\begin{equation*}
\begin{split}
\error^2 (n) &= \Var(\GP{\Omega} )  +  \left(   \E(\GP{\Omega}  ) - \sigma_{d} \Pem(\Omega) \right)^2.
\end{split}
\end{equation*}
Using the  special case $p=2$ of Theorem \ref{TheoremGeneral} to estimate the variance and using Lemma \ref{pnlp} to estimate the bias we obtain the following.
\begin{theorem}
\label{th:toter}
Let $\Omega \subset \D$ be an open set with smooth boundary.  Assume $n^{-\frac{2}{d+1}} \ll \veps_n \ll 1$ and
consider  $f(n,\veps_n)$  defined via \eqref{eq:rate_def}. The error of approximating $\sigma_{d} \Pem(\Omega)$ by $\GP{\Omega}$ satisfies
\begin{equation*}
\error (n)  =  O ( f(n,\veps_n) + \veps_n).
\end{equation*}
If we furthermore assume that $\Omega$ does not touch the boundary of $D$, that is  $\dist(\Omega, \partial \dom) >0$, then a better estimate holds:
\begin{equation*}
\error (n)  =  O ( f(n,\veps_n) + \veps_n^2 ).
\end{equation*}
%In addition,
%\begin{equation*}
%|\GP{\Omega} - \sigma_{d} \Pem(\Omega)| =  O ( f_{\alpha}(n)\omega(n) + n^{-2\alpha} )
%\end{equation*}
%where $\omega(n)$ almost surely grows more slowly than any power.
\end{theorem}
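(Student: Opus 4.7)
The plan is to exploit the variance-bias decomposition already displayed right before the theorem, and then plug in the two main ingredients of the section (Theorem \ref{TheoremGeneral} for the variance, Lemma \ref{pnlp} for the bias). Concretely, I first observe that
\begin{equation*}
\error^2(n) = \Var\bigl(\GP{\Omega}\bigr) + \bigl(\E(\GP{\Omega}) - \sigma_d \Pem(\Omega)\bigr)^2,
\end{equation*}
so it suffices to bound each term separately and then take a square root.

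For the variance, I would invoke Theorem \ref{TheoremGeneral} with $p=2$, which gives $\Var(\GP{\Omega}) = \E[(\GP{\Omega} - \E\GP{\Omega})^2] \leq C_{2,d}(\max\{1,\Pem(\Omega)\})^2 f(n,\veps_n)^2$. Since $\Omega$ is fixed with smooth boundary, $\Pem(\Omega)$ is finite, so the variance is $O(f(n,\veps_n)^2)$. Note that the standing assumption $n^{-2/(d+1)} \ll \veps_n$ is exactly what allows us to use Theorem \ref{TheoremGeneral}, which is defined in this regime via $f(n,\veps_n)$.

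For the bias, the key identity is $\E(\GP{\Omega}) = \Pem_{\veps_n}(\Omega)$, which follows by a direct computation: the $\x_i$ are i.i.d. uniform on $D$, so
\begin{equation*}
\E\bigl(\cut_{\veps_n}(V_n \cap \Omega, V_n \cap \Omega^c)\bigr) = n(n-1)\int_\Omega \int_{D\setminus\Omega}\1_{\{\|x-y\|\leq\veps_n\}}\,\Rd x\,\Rd y,
\end{equation*}
and the normalization in \eqref{RescaledGraphPerimeter} is chosen to exactly match \eqref{NonLocalPerimeter}. In the case $\dist(\Omega, \partial D) > 0$, Lemma \ref{pnlp} directly gives $|\Pem_{\veps_n}(\Omega) - \sigma_d \Pem(\Omega)| = O(\veps_n^2)$, yielding the second (sharper) estimate. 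In the general case, when $\Omega$ may touch $\partial D$, I would establish the weaker $O(\veps_n)$ bound by noting that the difference $\Pem_{\veps_n}(\Omega) - \sigma_d \Pem(\Omega)$ only picks up mass from a tubular neighborhood of $\partial\Omega \cap D$ of width $\veps_n$; the integrand in \eqref{NonLocalPerimeter} is pointwise $O(1)$ after rescaling, and the $(d-1)$-dimensional surface area of $\partial \Omega$ is finite, giving a contribution $O(\veps_n)$.

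Finally, combining the two bounds via $\error(n) \leq \sqrt{\Var(\GP{\Omega})} + |\E(\GP{\Omega}) - \sigma_d\Pem(\Omega)|$ produces $\error(n) = O(f(n,\veps_n) + \veps_n)$ in general, and $\error(n) = O(f(n,\veps_n) + \veps_n^2)$ when $\Omega$ stays away from $\partial D$. The only mildly delicate step is justifying the general $O(\veps_n)$ bias bound, since the argument in Lemma \ref{pnlp} (and Appendix \ref{AppendixLemmaBias}) presumes $\dist(\Omega,\partial D)>0$; however, as discussed in the remark following Lemma \ref{pnlp}, the linear rate comes entirely from a transversal intersection between $\partial\Omega$ and $\partial D$ and can be controlled by a localized tubular neighborhood argument. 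Everything else is a direct application of results already proven earlier in the paper.
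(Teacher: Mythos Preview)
Your proposal is correct and follows essentially the same approach as the paper: the paper states immediately before the theorem that the result follows by combining the variance--bias decomposition with the $p=2$ case of Theorem~\ref{TheoremGeneral} for the variance and Lemma~\ref{pnlp} (together with the remark after it for the general $O(\veps_n)$ bias) for the bias. Your identification of the general $O(\veps_n)$ bias bound as the only point requiring a separate comment matches the paper's treatment, which simply asserts it as ``straightforward to check'' and localizes the extra contribution to where $\partial\Omega$ meets $\partial D$.
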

%\noindent As the arguments above demonstrate, the proof of this theorem relies entirely upon Theorem \ref{TheoremGeneral} and establishing the requisite $O(\veps^{2}_n)$ estimate on the bias in the case when $\Omega$ is an open set with smooth boundary which lies at a positive distance form the boundary of $\D$.

% Dejan: I did this to make the paper more useful for a casual reader who is not really interested in reading it, but wants to know what are the best error bounds we have.
A simple calculation using  \eqref{eq:rate_def} allows one to choose a scaling of $\veps_n$ on $n$ so that the error of the approximation is as small as possible. In particular, for a set $\Omega$ which touches the boundary ($\dist(\Omega, \partial D)=0$) the optimal scaling of $\veps_n$ is
\[ \veps_n \sim \begin{cases} 
n^{-1/3}  \quad &\te{if } d \leq 3 \\
n^{-2/(d+3)}  & \te{if } d \geq 3
\end{cases}  
\quad \te{ giving } \quad
\error(n) =  \begin{cases} 
n^{-1/3}  \quad &\te{if } d \leq 3 \\
n^{-2/(d+3)}  & \te{if } d \geq 3.
\end{cases}  
\]
We note that for $d < 3$ the optimal $\veps_n$ is achieved in the  regime $n^{-1/d} \lesssim \veps_n \ll 1$, while if  $d>3$ it is in the sparse regime. If we consider sets $\Omega$ with smooth boundary but such that $\dist(\Omega, \partial D)>0$, then the optimal scaling of $\veps_n$ on $n$ is as follows
\[ \veps_n \sim \begin{cases} 
n^{-2/5}  \quad &\te{if } d \leq 5 \\
n^{-4/(d+5)}  & \te{if } d \geq 5
\end{cases}  
\quad \te{ giving } \quad
\error(n) =  \begin{cases} 
n^{-1/5}  \quad &\te{if } d \leq 5 \\
n^{-2/(d+5)}  & \te{if } d \geq 5.
\end{cases}  
\]
Again we note that the optimal $\veps_n$ is in the sparse regime if $d >5$. This has implications to how well is the perimeter estimated by graph cuts in the graphs 
considered in most machine learning applications. Namely if $d \geq 5$ and the graph has average degree bounded from below, that is when  $n^{-1/d} \lesssim \veps_n \ll 1$, then, since the bias bound is sharp, most of the error is due to the bias term.

\begin{remark}
The constant appearing in the moment estimates of Theorem \ref{TheoremGeneral} depends exclusively on the power $p$, the dimension $d$, and the true perimeter of the set $\Omega$; in particular, taking $p=2 $ we see that the estimates for the variance of $\GP{\Omega}$ are uniform on the class of sets $\Omega$ whose perimeter is bounded above by some fixed constant. For general sets with  large perimeter we obtain uniform estimates for moments of relative error $|\GPe{\Omega} - \E(\GP{\Omega})|/\BV{\Omega}$ instead of the absolute error. Thus, by combining Theorem \ref{TheoremGeneral} and the proof of Lemma  \ref{pnlp} one can derive that 
% the condition $\veps_n \ll 1$ is needed so as to guarantee that $\veps_n$ is smaller than the reach of the set $\Omega$ and the distance between $\Omega$ and $\partial D$ (this can be seen from our estimates in Appendix \ref{AppendixLemmaBias}). Furthermore 
the corresponding error estimates for $\lvert \GP{\Omega} - \sigma_d \Pem(\Omega)\rvert$ (either relative or absolute) are uniform on the class of sets $\Omega$ with smooth boundary satisfying the following conditions: The reach of $\Omega$ is bounded below by a fixed positive constant; The distance to the boundary $\partial D$ is bounded below by a fixed positive constant; The curvature and first derivatives of curvature are bounded from above by a fixed constant. This last requirement comes from the $O(\veps^2)$ terms following Taylor's theorem in the proof of our bias estimates.
\label{remarkUniform}
\end{remark}
\nc

%
%We note that, since the bias bound is sharp, if $d \geq 5$ and the graph has average degree bounded from below, that is when  $n^{-1/d} \lessapprox \veps_n \ll 1$ (as is the case in almost all machine learning applications), then most of the error is due to the bias term. 
 %Furthermore they imply that if the dimension $d > 5$ and the graph is in the dense regime $n^{-1/d} \ll \veps_n \ll 1$ then most of the approximation error is due to the deterministic bias term.
%Indeed, for $d$ sufficiently large these bounds suggest that $\veps_n$ should lie well \emph{below} the connectivity threshold for random geometric graphs.
 \medskip

We now consider obtaining confidence intervals for the value of the true perimeter $\Pem(\Omega)$ based on the estimator $\frac{1}{\sigma_d}\GPe{\Omega}$. We focus on the dense regime (See Figures \ref{fig:cutd} and \ref{fig:cutcd}), $\frac{1}{n^{1/d}}\ll \veps_n \ll 1$, and first obtain the asymptotic distribution of $\GPe{\Omega}- \E(\GPe{\Omega})$.  

\begin{figure}[!h]
\subfigure[$n = 100$, $\veps = 0.17$]{\includegraphics[width=60mm]{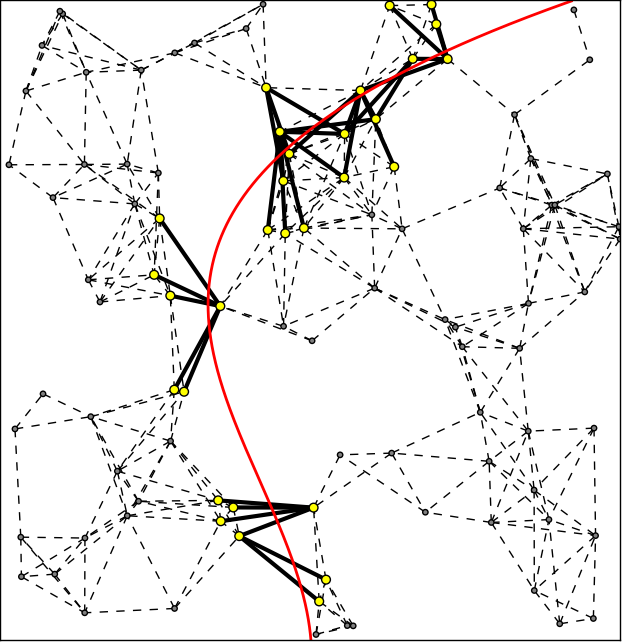}
%\put(-180,85){\blue \LARGE $\Omega$}
%\put(-66,85){\blue \LARGE $\Omega^c$}
}
\label{figureA}
\hspace*{2mm}
\subfigure[$n=300$ and $\veps=0.12$ %to illustrate that the degree is increasing with $n$ 
]{\includegraphics[width=60mm]{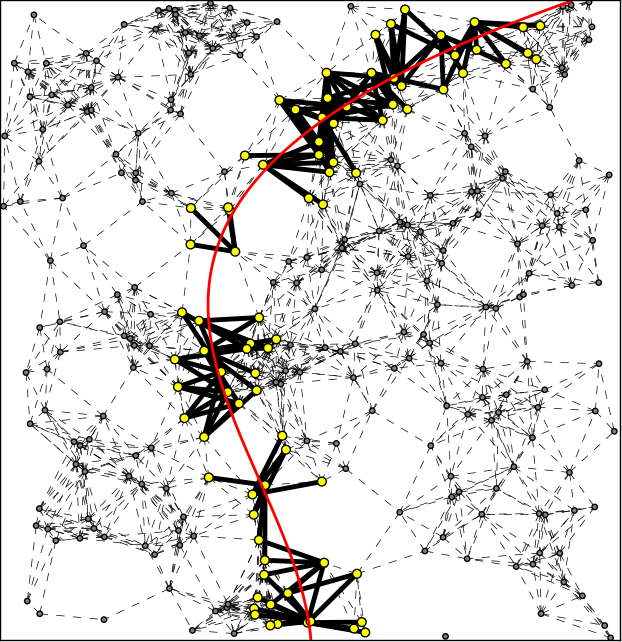}
%\put(-180,85){\blue \LARGE $\Omega$}
%\put(-66,85){\blue \LARGE $\Omega^c$}
}
\label{figureB}
\caption{Here we illustrate the ``very dense" regime, $1 \gg \veps  \gg (\ln(n)/n)^{1/d}$, when the graphs are connected with high probability. 
 } 
\label{fig:cutd}
\end{figure}

Since the term $\E \left( \GPe{\Omega} \right)= \Pem_{\veps_n}(\Omega)$ depends on the set $\Omega $ itself, which is unknown (we only assume we have an oracle access to it), we need the bias to be negligible compared to the standard deviation of our estimator. This allows us to construct confidence intervals for $\Pem(\Omega)$ without using any additional information about the bias (e.g. upper bounds).  From \eqref{MomentsTheorem} the  standard deviation of $\GPe{\Omega} $  scales as $\frac{1}{\sqrt{n \veps_n}}$ while by \eqref{bias1} the bias scales as $\veps_n^2$; these estimates lead to restrictions on the dimensions for which the bias is negligible with respect to the standard deviation. Namely this is possible for $d=2, 3,$ and $4$.

\begin{theorem}\label{AsympDistribFirstRegime} Let $\Omega\subseteq \D$ be an open set with smooth boundary such that $\dist(\Omega, \partial \D )>0$. Let $\veps_n$ be such that 
$$  \frac{1}{n^{1/d}} \ll \veps_n \ll 1 $$
Then,
$$ \sqrt{\frac{n \veps_n }{4 C_d \Pem(\Omega)}} \left( \GPe{\Omega} - \Pem_{\veps_n}(\Omega)  \right) \overset{w}{\longrightarrow}N(0,1), $$
where $C_d$ is given by 
\begin{equation}
C_d:= 2\int_{0}^{1} \left | B_d(0,1) \cap \left\{ x=(x_1,\dots, x_d) \in \R^d \: : \: x_d \geq t \right\} \right |^2 \Rd t.
\label{Cd}
\end{equation} 
If in addition the dimension $d $ is either $2,3$ or $4$ and if
\begin{equation} 
\frac{1}{n^{1/d}} \ll \veps_n \ll \frac{1}{n^{1/5}}
\label{conditionBiased}
\end{equation}
then,
\begin{equation}
\sqrt{\frac{n \veps_n }{ 4 C_d \Pem(\Omega)}} \left( \GPe{\Omega} -\sigma_d \Pem(\Omega)  \right) \overset{w}{\longrightarrow}N(0,1).
\label{eqn2}
\end{equation}
\end{theorem}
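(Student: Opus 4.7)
The plan is to view $\GPe{\Omega}$ as a U-statistic of degree $2$ with $n$-dependent symmetric kernel, apply Hoeffding's decomposition, and deduce the CLT via Lindeberg--Feller on the linear projection while showing the degenerate second-order piece is negligible. Concretely, set
\[
h_n(x,y):=\frac{1}{\veps_n^{d+1}}\bigl(\1_{x\in \Omega}\1_{y\in \Omega^c}+\1_{x\in \Omega^c}\1_{y\in \Omega}\bigr)\1_{\{\|x-y\|\leq \veps_n\}},
\]
so that $\GPe{\Omega}=\binom{n}{2}^{-1}\sum_{i<j}h_n(\x_i,\x_j)$ and $\E[h_n(\x_1,\x_2)]=\NLTVen{\Omega}$. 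With $g_n(x):=\E[h_n(x,\x_2)]-\NLTVen{\Omega}$, Hoeffding's decomposition gives
\[
\GPe{\Omega}-\NLTVen{\Omega}=\frac{2}{n}\sum_{i=1}^{n}g_n(\x_i)+R_n,
\]
where $R_n$ is a canonical (degenerate) second-order term.

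The asymptotic variance is the next step. For $x\in\Omega$, $g_n(x)+\NLTVen{\Omega}=\veps_n^{-(d+1)}\vol(B(x,\veps_n)\cap \Omega^c)$, which is supported in the tubular neighborhood of width $\veps_n$ around $\partial\Omega$; symmetrically for $x\in \Omega^c$. Because $\dist(\Omega,\partial D)>0$, this neighborhood lies strictly inside $D$. Introducing Fermi coordinates $x=p+s\nu(p)$ with $p\in\partial\Omega$ and $|s|\leq \veps_n$, and Taylor-expanding the ball--half-space volume in terms of the principal curvatures of $\partial\Omega$ at $p$, one obtains
\[
g_n(x)+\NLTVen{\Omega}=\frac{1}{\veps_n}\,\phi\!\left(\frac{|s|}{\veps_n}\right)+O(1),\qquad \phi(t):=\vol\bigl(B(0,1)\cap\{z_d\geq t\}\bigr).
\]
Integrating in Fermi coordinates and substituting $t=s/\veps_n$ then yields
\[
\Var(g_n(X))=\frac{1}{\veps_n}\,\Pem(\Omega)\cdot 2\int_0^{1}\phi(t)^{2}\,dt+O(1)=\frac{C_d\,\Pem(\Omega)}{\veps_n}+O(1),
\]
with $C_d$ as in \eqref{Cd}.

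For the CLT on the linear part, $|g_n|\leq C/\veps_n$, so Lyapunov's condition for $(n\Var g_n)^{-1/2}\sum_i g_n(\x_i)$ reduces to $\E|g_n|^4/(n\,\Var(g_n)^2)\lesssim \veps_n^{-3}/(n\veps_n^{-2})=1/(n\veps_n)\to 0$ in the dense regime. Thus
\[
\sqrt{\frac{n\veps_n}{4C_d\Pem(\Omega)}}\cdot\frac{2}{n}\sum_{i=1}^{n}g_n(\x_i)\overset{w}{\longrightarrow}N(0,1).
\]
The remainder is controlled via the standard U-statistic variance identity $\Var(R_n)\lesssim \E[h_n^2]/n^2=O(1/(n^2\veps_n^{d+1}))$, so $\Var(R_n)/(\Var(g_n)/n)=O(1/(n\veps_n^d))\to 0$ whenever $\veps_n\gg n^{-1/d}$. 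Slutsky's theorem delivers the first assertion. For \eqref{eqn2}, write $\GPe{\Omega}-\sigma_d\Pem(\Omega)=[\GPe{\Omega}-\NLTVen{\Omega}]+[\NLTVen{\Omega}-\sigma_d\Pem(\Omega)]$: by Lemma~\ref{pnlp} the bias term is $O(\veps_n^2)$, so multiplied by $\sqrt{n\veps_n}$ it is $O(\sqrt{n\veps_n^5})$, which tends to $0$ exactly when $n\veps_n^5\to 0$; the range $n^{-1/d}\ll \veps_n\ll n^{-1/5}$ imposed in \eqref{conditionBiased} is nonempty precisely when $d\leq 4$, matching the hypothesis. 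Slutsky again concludes.

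The main difficulty I expect is the variance computation: making the Fermi-coordinate expansion of $g_n(x)+\NLTVen{\Omega}$ quantitative, tracking the ball--half-space volume through curvature corrections, and verifying that the resulting error terms genuinely contribute at order lower than $1/\veps_n$ after integration over the boundary strip. The smoothness of $\partial\Omega$ and positivity of $\dist(\Omega,\partial D)$ are both used essentially here to ensure Fermi coordinates are globally defined on the relevant tube and no contribution from $\partial D$ enters.
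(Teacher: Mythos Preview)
Your proposal is correct and follows essentially the same route as the paper: Hoeffding decomposition of the U-statistic, the Fermi-coordinate computation yielding $\Var(g_n)=C_d\Pem(\Omega)/\veps_n+O(1)$ (which the paper isolates as Lemma~\ref{LemmaVarianceg}), a Lyapunov-type CLT for the linear part, the bound $\Var(R_n)=O(1/(n^2\veps_n^{d+1}))$ for the degenerate piece, and Slutsky together with the $O(\veps_n^2)$ bias from Lemma~\ref{pnlp}. The only cosmetic differences are that the paper packages the moment/Lyapunov and remainder estimates through the previously established bounds (Lemma~\ref{LemmaMomentsAveragephi} and \eqref{MomentsUn2Final}) rather than rederiving them inline.
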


\begin{figure}
\subfigure[$n=100$ and $\veps=0.12$]{ \includegraphics[width=59mm]{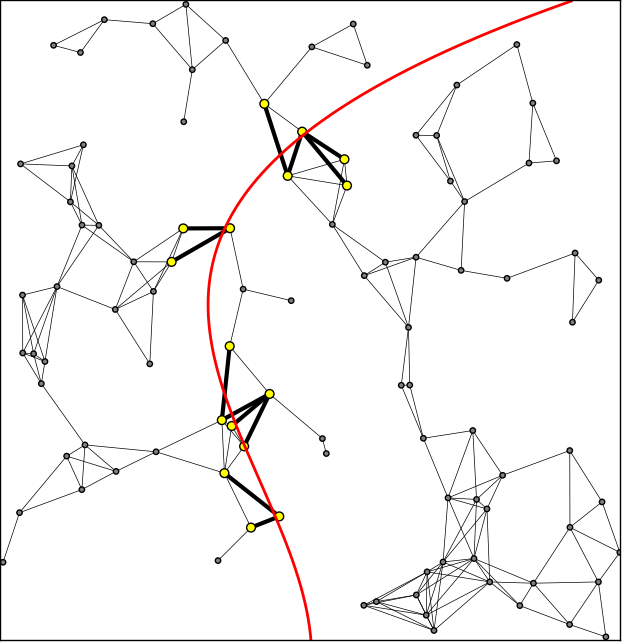}
%\put(-174,85){\blue \LARGE $\Omega$}
%\put(-50,85){\blue \LARGE $\Omega^c$}
}
\label{figureC}
\hspace*{2mm}
\subfigure[$n=400$ and $\veps=0.07$]{ \includegraphics[width=59mm]{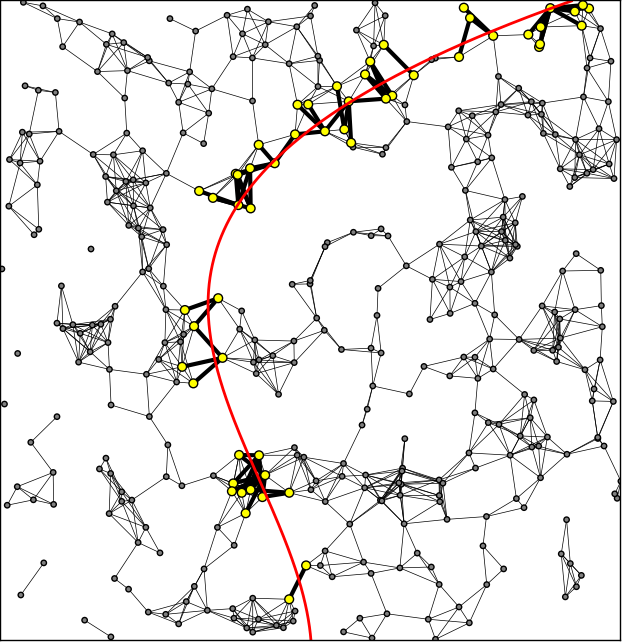}
%\put(-174,85){\blue \LARGE $\Omega$}
%\put(-50,85){\blue \LARGE $\Omega^c$}
}
\label{figureD}
\caption{Here we illustrate an intermediate, "dense", regime $(\ln(n)/n)^{1/d} \gg \veps   \gg (1/n)^{1/d}$. The average degree still increases, but the graphs are disconnected with high probability.\nc} 
\label{fig:cutcd}
\end{figure}

Naturally, the previous theorem implies that one can obtain confidence intervals for the value of $\Pem(\Omega)$ when the dimension $d$ is $2,3$ or $4$. Let us fix $\alpha \in (0, 0.5)$ and let $Z_\alpha $ be the $1-\alpha$ quantile of the standard normal distribution. That is, $Z_\alpha$ is such that
$$ \mathbb{P}\left( N(0,1) \leq Z_\alpha  \right)= 1-\alpha.$$
Then provided that
$$ \frac{1}{n^{1/d}} \ll \veps_n \ll \frac{1}{n^{1/5}},  $$
it follows that with probability converging to $1-\alpha$, $  \Pem(\Omega) \in (a_n^- , a_n^+)$, where
\[  a_n^{\pm} := \frac{1}{\sigma_d}  ( \GPe{\Omega} \pm \sqrt{\frac{4 C_d \Pem(\Omega) }{n\veps_n}} Z_{\alpha/2} ). \]

 For $d \geq 5$, the confidence intervals can not be obtained unless one has some extra quantitative information on the smoothness of $\partial \Omega,$ such as curvature bounds or upper bounds on the bias. Nevertheless, since it is known that the nonlocal perimeter $\Pem_{\veps_n}(\Omega)= \E ( \GPe{\Omega})$ is less than $\sigma_d \Pem(\Omega)$ (see \eqref{PeP}), one can construct a test for the hypothesis that $\Pem(\Omega)$ is \emph{less}  than a certain number $\rho$ without using any quantitative estimates on the smoothness of $\partial \Omega$. In this case, we have automatic upper bounds for the bias and may consider the hypothesis testing of

\begin{equation}
 H_0: \Pem(\Omega) \leq \rho ,  \quad  \text{vs}  \quad  H_A: \Pem(\Omega)> \rho,
 \label{TestPerimeter}
\end{equation}
based on our estimator $\GPe{\Omega}$.
We consider the statistic:
\begin{equation}
l_n := \sqrt{\frac{n \veps_n}{4 C_d \rho}} \left( \GPe{\Omega} - \sigma_d\rho \right). 
\label{lnStatistic}
\end{equation}
The test consists on
\begin{equation}
 \text{Accept } H_0 \text{ if } l_n \leq Z_\alpha, \quad \text{reject otherwise. }
 \label{hypothesisTestGPer}  
\end{equation} 

\begin{proposition} \label{hyprop}
Assume $d \geq 2$ and  $\frac{1}{n^{1/d}}\ll \veps_n \ll 1$. Then, the type I error satisfies
\[ \limsup_{n \rightarrow \infty}  \mathbb{P}_{H_0} \left(  l_n > Z_\alpha  \right)  \leq \alpha, \]
i.e., the type I error is asymptotically below $\alpha$. The  type II error satisfies
\[ \mathbb{P}_{H_A}(l_n \leq Z_\alpha ) = O\left(\frac{1}{\sqrt{n \veps_n}} \right). \] 
\end{proposition}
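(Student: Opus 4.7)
The plan is to decompose the test statistic $l_n = A_n + B_n$ into a mean-zero random piece
\begin{equation*}
A_n := \sqrt{\tfrac{n\veps_n}{4 C_d \rho}}\bigl(\GPe{\Omega} - \NLTVe{\Omega}\bigr)
\end{equation*}
and a deterministic ``bias-plus-gap'' piece
\begin{equation*}
B_n := \sqrt{\tfrac{n\veps_n}{4 C_d \rho}}\bigl(\NLTVe{\Omega} - \sigma_d \rho\bigr),
\end{equation*}
and to handle each under the corresponding hypothesis. The fluctuation $A_n$ will be controlled either by the asymptotic distribution from Theorem \ref{AsympDistribFirstRegime} (for the type I bound) or by the moment estimate from Theorem \ref{TheoremGeneral} (for the type II bound); the deterministic part $B_n$ will be analyzed using the one-sided comparison $\NLTVe{\Omega} \leq \sigma_d \BV{\Omega}$ together with the convergence $\NLTVe{\Omega} \to \sigma_d \BV{\Omega}$ as $\veps_n \to 0$.

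For the type I bound, $H_0$ yields $\NLTVe{\Omega} \leq \sigma_d \BV{\Omega} \leq \sigma_d \rho$, so $B_n \leq 0$ and hence $\mathbb{P}_{H_0}(l_n > Z_\alpha) \leq \mathbb{P}(A_n > Z_\alpha)$. Introducing $\widetilde A_n := \sqrt{n\veps_n/(4 C_d \BV{\Omega})}\,(\GPe{\Omega} - \NLTVe{\Omega})$ one may write $A_n = \sqrt{\BV{\Omega}/\rho}\,\widetilde A_n$, and on $H_0$ the factor $\sqrt{\BV{\Omega}/\rho}$ is at most one. Since $Z_\alpha > 0$, this factor only shrinks the event, giving $\{A_n > Z_\alpha\} \subseteq \{\widetilde A_n > Z_\alpha\}$, and Theorem \ref{AsympDistribFirstRegime} provides $\mathbb{P}(\widetilde A_n > Z_\alpha) \to \alpha$. (The degenerate case $\BV{\Omega} = 0$ is immediate: then $\GPe{\Omega} = 0$ almost surely and $l_n \to -\infty$.)

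For the type II bound, set $\delta := \BV{\Omega} - \rho > 0$. The convergence $\NLTVe{\Omega} \to \sigma_d \BV{\Omega}$ forces $\NLTVe{\Omega} - \sigma_d\rho \geq \sigma_d\delta/2$ for all $n$ large, which in turn gives $B_n \geq c \sqrt{n \veps_n}$ for an explicit constant $c = c(\Omega, \rho) > 0$. Then $\{l_n \leq Z_\alpha\} \subseteq \{A_n \leq Z_\alpha - B_n\}$, and unravelling the definition of $A_n$ contains this event in $\{|\GPe{\Omega} - \NLTVe{\Omega}| \geq c'\}$ for some $c' > 0$ independent of $n$. Markov's inequality combined with the $p=1$ case of Theorem \ref{TheoremGeneral} then yields
\begin{equation*}
\mathbb{P}_{H_A}(l_n \leq Z_\alpha) \leq \frac{C_{1,d}\,\max\{1,\BV{\Omega}\}\,f(n,\veps_n)}{c'} = O\!\left(\frac{1}{\sqrt{n \veps_n}}\right),
\end{equation*}
since $f(n,\veps_n) = 1/\sqrt{n\veps_n}$ throughout the regime $\veps_n \gg n^{-1/d}$.

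The only real delicacy is the type I step: the test is normalized by the unknown $\rho$ instead of $\BV{\Omega}$, and the mean $\NLTVe{\Omega}$ of the estimator is not exactly $\sigma_d \BV{\Omega}$. Both issues are handled simultaneously by the one-sided comparison $\NLTVe{\Omega} \leq \sigma_d \BV{\Omega} \leq \sigma_d \rho$, which ensures that any scaling or bias mismatch pushes the statistic in the direction favorable to the test. No quantitative bias or curvature information about $\partial \Omega$ is required, which is precisely what makes the test useful in the regime $d \geq 5$ where confidence intervals are not available.
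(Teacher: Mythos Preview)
Your proof is correct and follows essentially the same approach as the paper's: both arguments split $l_n$ into the centered fluctuation $\GPe{\Omega}-\NLTVe{\Omega}$ and the deterministic gap $\NLTVe{\Omega}-\sigma_d\rho$, use the one-sided inequality $\NLTVe{\Omega}\le\sigma_d\BV{\Omega}\le\sigma_d\rho$ under $H_0$ together with Theorem~\ref{AsympDistribFirstRegime} for the type~I bound, and use $\NLTVe{\Omega}\to\sigma_d\BV{\Omega}>\sigma_d\rho$ under $H_A$ together with Markov's inequality and Theorem~\ref{TheoremGeneral} for the type~II bound. The only cosmetic difference is that the paper carries out the type~I reduction in a single inequality $l_n\le\sqrt{n\veps_n/(4C_d\BV{\Omega})}\,(\GPe{\Omega}-\NLTVe{\Omega})$ rather than via your intermediate factor $\sqrt{\BV{\Omega}/\rho}$, but the content is identical.
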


\begin{remark}[Extensions]
The estimates obtained in Theorem \ref{TheoremGeneral} and Theorem \ref{th:toter} are not exclusive to the case where the points are uniformly distributed in the unit cube and to geometric graphs. In fact, with slight modifications to the proofs of Theorem \ref{TheoremGeneral} and Theorem \ref{th:toter}, we can extend these results to more general situations. For example, if the data points are distributed according to some smooth density $p$ that is supported on a regular, bounded domain $D \subset \R^d$ with $p$ bounded below and above by positive constants, then the results  still hold. In this case, the limiting value of $\GPe{\Omega}$ is a \emph{weighted} perimeter $\Pem(\Omega,p^2)$ (see for example \cite{GTS}). Convergence is guaranteed for the same scaling for $\veps_n$ as in the uniform case. The fact that the weight is $p^2$ and not $p$ (as may be a priori expected) ultimately comes from the fact that a graph cut is a double sum.

Furthermore if instead of weights $\1_{\{\| \x_i - \x_j\| \leq \veps_n\}}$ in the definition of the graph cut, one considers edge weights $\eta(\| \x_i - \x_j\| / \veps_n)$ where $\eta$ is nonnegative, integrable and non-increasing, the results still hold, provided we change the surface tension $\sigma_d$ with a surface tension associated to $\eta$ defined as in \cite{GTS}.   
% Moreover, the error estimates provided by Theorem \ref{th:toter} continue to hold even in the case that part of $\partial \Omega$ intersects the boundary of the unit cube. In this case, the error bounds for $\GP{\Omega}-\sigma_{d}\BV{\Omega}$ still hold except that the error due to the bias $|\Pem_{\veps_n} (\Omega) - \sigma_{d} \BV{\Omega}|$ is in general $O(\veps_n)$ and not $O(\veps_n^2)$ as stated in the corollary.
\end{remark}

\begin{remark}
The previous results allow us to construct asymptotic confidence intervals for $\Pem(\Omega)$ for an arbitrary set $\Omega$ with smooth boundary and $\dist(\Omega, \partial D) >0$, using the estimator $\GP{\Omega}$. Nevertheless, since very small sets (in the sense of volume) may have very large perimeter the error estimates are not uniform in $\Omega$ and thus the asymptotic confidence intervals may not be of direct practical use. This issue is unavoidable without further assumptions on the set $\Omega$ or a change of framework for testing. One possible way to restrict the class of $\Omega$ considered is pointed out in Remark \ref{remarkUniform}. In the class of sets $\Omega$ to which Remark \ref{remarkUniform} applies, the error of approximation is uniformly controlled. 

%First suppose that the set $\Omega$ is known to be of the type described in Remark \ref{remarkUniform} where the mentioned constants are moderate. In that case, we can go around the issue described before because the constants of the error of approximation would then be uniformly controlled. 
Alternatively, one may consider the \textit{property testing} framework used in \cite{Neeman,Kothari}, where one could test the property ``the set $\Omega$ is close to a set whose perimeter is less than $\rho$" reliably. In that framework we would not be using the estimator $\GP{\Omega}$ to test whether $\Pem(\Omega)$ is below $\rho$ or not, but rather whether $\Omega$ is \textit{close} to a set whose perimeter is below $\rho$ or not. We notice that in this paper we have taken the classical hypothesis testing approach. 
 
\end{remark}

\emph{Outline.}  In Subsection \ref{sec:out} we establish \eqref{temp1} and give an outline of the argument behind our main results. 
We present the proof of Theorem \ref{TheoremGeneral} in Section \ref{sec:2}, while in Subsection \ref{sec:opt} we show that the scaling is sharp (up to logarithmic corrections) in the sense that if $n^{2}\veps^{d+1}_n \to 0$ 
then $ \GP{\Omega}$ converges in probability to zero and hence does not converge almost surely to the (rescaled) relative perimeter.   %This is, it shows upper bounds on the difference between the non-local perimeter and the perimeter of a set with smooth boundary. 
In Section \ref{ProofTheoremDistrib} we prove the results on the asymptotic distribution of the error stated in Theorem \ref{AsympDistribFirstRegime}. In Subsection \ref{TestingSec} we study the type I and type II errors of the hypothesis test of Proposition \ref{hyprop}. Finally, Appendix \ref{AppendixLemmaBias} deals with the bias estimate from Lemma \ref{pnlp}.
%\red Finally, in Section \ref{sec:num} we present  results of extensive numerical experiments in two dimensions that further illustrate the results and explore the question of how to choose the parameter $\veps_n$ to obtain the best performance for the test \ref{hypothesisTestGPer}.\nc

%\emph{Outline.}  In Subsection \ref{sec:out} we establish \eqref{temp1} and give an outline of the argument behind our main results. 
%We present the proof of Theorem \ref{TheoremGeneral} in Section \ref{sec:2}, while in Subsection \ref{sec:opt} we show that the scaling is sharp in the sense that if $n^{2}\veps^{d+1}_n \to 0$ 
%then $ \GP{\Omega}$ converges in probability to zero and hence does not converge almost surely to the (rescaled) relative perimeter. Section \ref{sec:bias} deals with the bias estimate and contains a proof of Theorem \ref{th:toter}.  %This is, it shows upper bounds on the difference between the non-local perimeter and the perimeter of a set with smooth boundary. 
%In Section \ref{ProofTheoremDistrib} we prove the results on the asymptotic distribution of the error stated in Theorem \ref{AsympDistribFirstRegime}. Finally in Subsection \ref{TestingSec} we prove the error bounds on type I and type II errors of the hypothesis test of Proposition \ref{hyprop}.
%%\red Finally, in Section \ref{sec:num} we present  results of extensive numerical experiments in two dimensions that further illustrate the results and explore the question of how to choose the parameter $\veps_n$ to obtain the best performance for the test \ref{hypothesisTestGPer}.\nc

\subsection{Discussion.}
\label{Discussion} Here we discuss the connections between our work and  related works in the literature. First we relate it to other estimators of perimeter based on a random sample. Then we contrast the type of the convergence and the scaling regimes  considered in this paper with the ones needed for the convergence of graph-cut based machine learning algorithms for clustering and related tasks.

The problem of estimating the perimeter of a set, $\Omega$, based on knowing which points of a random sample $V_n$ belong to $\Omega$, has been considered by a number of works. 
Cuevas, Fraiman, and Rodr{\'i}guez-Casaet \cite{CuFrRo07},
% Minkowski content, estimator: count the yellow points in our image, our very dense regime, only use that Minkovskiis 1-st order.  our bias estimate in T1 is better than theirs for in T2 (also in (11))
considered estimators of the Minkowksi content, which agrees with perimeter for regular enough sets, but is a less general notion of the perimeter than the one we consider, \eqref{perimeter}. Their estimator is based on counting the vertices near the boundary (relative to a parameter $\veps_n$), while we ``count" edges of a graph. The error bound obtained was of order 
$O(n^{ -1/2d })$. Pateiro-L\'opez and Rodr\'iguez-Casal \cite{PatRod08} consider a similar estimator and improve the bounds to  $O((\ln n/n)^{1/(d+1)})$ under qualitative regularity conditions (rolling ball conditions). 
  % they use the algorithm as in papers of Cuevas. Under smoothness asumprions they obtain better error bounds (Theorem 2). However these are still not as good as what we get in T1.6 and comment below.
Cuevas, Fraiman, and Gy{\"o}rfi  \cite{CuFrGy13}, obtain the convergence of estimators similar to those of
\cite{CuFrRo07}, when $\veps_n \gg 1/n^{1/d}$ and under weaker conditions on the regularity of $\Omega$, although still not in the full generality we consider in this paper.

Armend\'ariz, Cuevas, and Fraiman \cite{ArCuFr09} consider a similar set-up to that in \cite{CuFrRo07, PatRod08} but with different sampling rates for $\Omega$ and $\Omega^c$: let $n$ denote the number of sample points in $\Omega$ and $k$ the number of sample points in $\Omega^c$.
Under mild assumptions on the regularity of $\partial \Omega$, and under some conditions on $n$ and $k$ which include  $1 \gg \veps_{n,k} \gg (1 / n^{1/3})$ and $k \gg  (n/\veps_{n,k})^{d/2}$
they obtain the asymptotic distribution of the error, under the same scaling in $n$ and $\veps$ that we consider in Theorem \ref{AsympDistribFirstRegime}. 
% use one-seded minkovski content. different sampling rates for \Omega and \Omega^c, identify limiting distribution for the variance error, same rescaling as us.
We notice that our estimator is different, and also that in
\cite{ArCuFr09} a very large number of points $k$ in $\Omega^c$ is needed for the consistency to hold. This allows the authors to obtain the asymptotic distribution of the total error in any dimension, while in our setting we only obtain it in low dimensions.
A further difference between their work and ours, is that we allow for a wider range in $\veps$, namely $1 \gg \veps_n \gg (1 / n)^{2/(d+1)}$.

% emphasize generality, but still use Minkowski content, dense regime (\veps \gg 1/n^{1/d})
Jim{\'e}nez and Yukich \cite{JinYuk11} give a different use to the point cloud and instead of considering a parameter $\veps$ to count points close to the boundary of the set or to define a geometric graph, they consider a new estimator based on the Delaunay triangulation induced by the cloud. They obtain results not only on estimating the perimeter of the set, but also integrals of functions over $\partial \Omega$. 
% considers rectifiable sets of finite , Hausdorff measure, estimates surface integrals on the boundary, Delaunay triangulation, L^\infty approximate surfaces (they do not approximate in C^1 so the roughness (3.9) of approximating surface needs to be estimated. 

Kothari, Nayyeri, and O'Donnell \cite{Kothari} and 
 Neeman \cite{ Neeman} consider an estimator essentially based on the following procedure: pick $n$ random points uniformly distributed on $D$  and to each point associate a random direction for a `needle' based at the point with length of order $\sqrt{\veps_n}$; then count how many of the needles touch the boundary of $ \Omega$. Their main motivation is to consider the perimeter estimation from the viewpoint of \textit{property testing} as introduced in \cite{TestingProperties}. In that setting, the idea is to produce an algorithm that requires a small number of samples (essentially independent of the dimension $d$) in order to determine if a given set has a small perimeter or is `far away' from a set that has small perimeter.  
The authors show completeness and soundness of the test they design (the notion of completeness and soundness is as in \cite{TestingProperties, Kothari}). The notion of testing used in their work is one of the main differences with our work since we consider the perimeter testing in the more classical framework of hypothesis testing (Proposition \ref{hyprop})\nc. We note that the completeness of \cite{Kothari} is analogous to the type I error, but the soundness is fundamentally different from estimating the type II error.
  % both in the setting of property testing  
 %  the algorithm needs points X, and Y = X+ sqrt{2t} Z where Z is random normal.
 % if true perimeter is low the algorithm should accept with high probability
 % if test gives low perimeter then a set near the true set exists with low perimeter
 %
 % Pateiro-L\'opez and Rodr\'iguez-Casal \cite{PatRod08} for recent contributions. 
  % they use the algorithm as in papers of Cuevas. Under smoothness asumprions they obtain better error bounds (Theorem 2). However these are still not as good as what we get in T1.6 and comment below.

It is also worth mentioning the work of Belkin, Narayanan and Niyogi \cite{ConvexSets} where they consider an algorithm that requires as few samples as possible in order to estimate the perimeter of a convex body. Their results show that there is an algorithm that uses $O(d^4 \gamma^{-2} )$ samples to obtain an estimator for the true perimeter of the convex set, with an error of approximation of $\gamma$; this statement holds with high probability. We notice the polynomial dependence on dimension in their estimates.

Let us now contrast the type of the convergence and the scaling regimes that we consider in this paper with the ones needed for the convergence of graph-cut based machine learning algorithms for clustering and related tasks. 

Graph-cut based algorithms for tasks such as clustering have played an important role in machine learning
\cite{ACPP,  BLUV12, HagKah, HeinSetz, KanVemVet04, ShiMalik,  SzlamBresson,  vonLux_tutorial,  WeiChe89}. Data clustering algorithms are called consistent if as the sample size $n \to \infty$
their outputs converge to a desired partitioning of the underlying measure being sampled. 
It is of interest to understand under what scaling of $\veps_n$ on $n$ does the consistency hold. Here we showed that for a \emph{fixed} set $\Omega$ the value of graph perimeter
(and indeed of the graph-cut-based objective functionals such as Cheeger, ratio, or normalized cuts) converges to the perimeter of $\Omega$; this result holds even for rather sparse graphs (Theorem \ref{th:toter}). In particular, partitioning such sparse graphs (as on Figure \ref{fig:cutsp}) does not provide almost  any information about the clusters present in the data. We conclude that the convergence of the graph perimeter of a fixed set towards the continuum perimeter does not provide the needed information on the asymptotic properties of graph-cut based clustering algorithms. To obtain consistency of such algorithms one needs a stronger notion of convergence of graph based functionals towards continuum functionals.  
Recently the authors \cite{GTS}, and together with Laurent and Bresson \cite{TSvBLX_jmlr} have developed the appropriate notion of convergence (based on $\Gamma$-convergence from the calculus of variations), and have applied it to consistency of Cheeger, ratio, sparsest, and normalized cut based point cloud clustering.
More specifically, in \cite{TSvBLX_jmlr} it is shown that consistency holds if $ \frac{(\log(n))^{1/d}}{n^{1/d}} \ll \veps \ll 1$ (for $d \geq 3$), in that regime the graphs are connected with high probability as $n \to \infty$. 
\nc

\subsection{Outline of the argument} \label{sec:out}

In order to understand the asymptotic behavior of the graph perimeter, following \cite{ACPP} we first define a symmetric kernel $\phi_{\veps}: \D \times \D \rightarrow (0,\infty)$ by
$$\phi_{\veps_n}(x,y) = \frac{ \1_{\{||x-y|| \leq \veps_n\}}  }{\veps^{d+1}_n}| \1_\Omega(x) - \1_\Omega(y)|.$$ 
Using the kernel $\phi_{\veps_n}$, we can then write $\GPe{\Omega}$ as
\begin{equation}
 \GPe{\Omega}  =  \frac{2}{n (n-1)}\sum_{i=1}^{n}\sum_{j=i+1}^{n} \phi_{\veps_n}(\x_i,\x_j), 
 \label{eq:tri_tv}
\end{equation}
which is a  $\emph{U-statistic}$ in the terminology of %Hoeffding 
\cite{hoeffding}. A simple computation shows that the mean of this $U$-statistic, is the non-local perimeter $\NLTVenon{\Omega}$ defined in \eqref{NonLocalPerimeter}, that is, 
\begin{equation}
\E(\GPe{\Omega}) =  \NLTVe{\Omega} =  \int_{\dom}\int_{\dom} \phi_{\veps}(x,y)\; \Rd x \Rd y.
\label{eq:tri_mean}
\end{equation}
\nc
Additionally, Remark 4.3 in \cite{GTS} establishes that the non-local perimeter $\NLTVe{\Omega}$ approaches a constant multiple of the relative perimeter of $\Omega$ as the parameter $\veps_n$ goes to zero. More precisely, if $\veps_n \to 0$ as $n \to \infty$ then
\begin{equation}
  \NLTVen{\Omega} \to \sigma_{d} \BV{\Omega} \qquad \te{as } n \to \infty
 \label{NonLocalpoint-wiseConv}
\end{equation}
for $\sigma_{d}$ the surface tension \eqref{sigma}. This convergence also follows from the estimates in Appendix \ref{AppendixLemmaBias} in the special case that $\Omega$ has a smooth boundary. Combining \eqref{NonLocalpoint-wiseConv} with \eqref{eq:tri_mean} we conclude that if $\veps_n$ converges to zero as $n \to \infty$ then
\begin{equation}
\E(\GP{\Omega}) \to  \sigma_{d} \BV{\Omega} \qquad \te{as } n \to \infty.
\label{MeanConvergingToPerimeter}
\end{equation}
Since the graph perimeter $\GPe{\Omega}$ is a $U$-statistic of order two we can use the general theory of $U$-statistics to obtain moment estimates for $\GPe{\Omega}$. Let us first note that Hoeffding's decomposition theorem for $U$-statistics of order two (see \cite{TheoryUStat}) implies that $\GPe{\Omega}$ can be written as:
\begin{equation}
 \GPem_{n,\veps_n}(\Omega) - \Pem_{\veps_n}(\Omega) = 2 U_{n,1} + U_{n,2}, 
 \label{CanonicalDecomp}
\end{equation}
where $U_{n,1}$ is a U-statistic of order one ( just a sum of  centered independent random variables) and $U_{n,2}$ is a U-statistic of order two which is \emph{canonical} or completely degenerate (see \cite{TheoryUStat}). In order to define the variables $U_{n,1}$ and $U_{n,2}$, let us introduce the functions
\begin{alignat}{2}
%\begin{split}
 \bar{\phi}_{\veps_n}(x) &:= \int_{\D}\phi_{\veps_n}(x,z) \, \Rd z , & \quad x &\in \D, \notag\\
 g_{n,1}(x)&:= \bar{\phi}_{\veps_n}(x) - \Pem_{\veps_n}(\Omega) , & \quad x & \in \D,   \label{CanonicalDecomp1} \\
 g_{n,2}(x,y) &:= \phi_{\veps_n}(x,y) - \bar{\phi}_{\veps_n}(x) - \bar{\phi}_{\veps_n}(y) + \Pem_{\veps_n}(\Omega), & \quad x,y &\in \D.   \notag
%\end{split}
\end{alignat}
With the previous definitions, we can now define
\begin{align}
\begin{split}
U_{n,1} &= \frac{1}{n}\sum_{i=1}^{n} g_{n,1}(\x_i),   \\
U_{n,2} &= \frac{2}{n(n-1)}\sum_{1\leq i < j \leq n} g_{n,2}(\x_{i},\x_{j}). 
\end{split}
\label{CanonicalDecomp2}
\end{align}

%The term $\theta_n$ on the other hand is simply the expectation of $\GPe{\Omega}$, which in this case is simply the non-local perimeter of $\Omega$ with parameter $\veps_n$, that is,
%$$ \theta_n =\Pem_{ \veps_n}(\Omega)= \int_\D \int_\D \frac{ \1_{||x-y|| \leq \veps_n}  }{\veps_n^{d+1}} |\1_\Omega(x) - \1_\Omega (y) | \Rd x \Rd y.  $$ 
%
We remark that $\int_{\D}g_{n,1}(z) \Rd z =0$ and that $\int_{\D}g_{n,2}(x,z)\Rd z  =0$ for all $x\in \D$. Because of this, $U_{n,1}$ and $U_{n,2}$ are said to be canonical statistics of order one and two respectively (see \cite{TheoryUStat}). Now, Bernstein's inequality \cite{bern24} implies that
\begin{equation}
 \E(|U_{n,1}|^p) \leq \frac{C_p}{n^p} \max \left(  A_{n,1}^p , B_{n,1}^p \right),
 \label{MomentsU1}
\end{equation}
where
\begin{equation}
A_{n,1} := || g_{n,1}||_{\infty}, \quad B_{n,1} := \sqrt{n}|| g_{n,1}||_{2}. 
\label{ConstantsU1}
\end{equation}
and $C_p$ is a universal constant. See also \cite{GineLatala} for a slight generalization of the previous result.

On the other hand some of the moment estimates in \cite{GineLatala} for canonical $U$-statistics of order two can be used to prove that
\begin{equation}
\E\left(| U_{n,2}|^p \right) \leq  \frac{C_p}{n^{2p}} \max\left(   A_{n,2}^p  , B_{n,2}^p   , C_{n,2}^p  \right),
\label{MomentsUn2}
\end{equation} 
where
\begin{equation}
A_{n,2}:= ||g_{n,2}||_{\infty}, \quad B_{n,2} := n || g_{n,2}||_2, \quad (C_{n,2})^2:= n || \int_{\D}g_{n,2}^2(\cdot,y )\; \Rd y   ||_\infty. 
\label{ConstantsU2}
\end{equation}
and $C_p$ is a universal constant. From the decomposition \eqref{CanonicalDecomp} it follows that for $p\geq1$
$$ \E \left( | \GPe{\Omega} - \Pem_{\veps_n}(\Omega) |^p \right) \leq  C_p ( \E\left( |U_{n,1}|^p \right) + \E\left( |U_{n,2}|^p \right)  ). $$ 
Thus in order to obtain the moment estimates for $\GPe{\Omega}$ in Theorem \ref{TheoremGeneral}, 
we focus on finding estimates for the quantities in \eqref{ConstantsU1} and \eqref{ConstantsU2}.  

\begin{remark}
The estimates on $U_{n,1}$ and $U_{n,2}$ exhibit a crossover in the nature when the parameter $\veps_n$ transitions between the sparse ($\frac{1}{n^{2/(d+1)}} \ll \veps_n \ll \frac{1}{n^{1/d}} $ ) and the dense regime ( $\frac{1}{n^{1/d}} \ll \veps_n \ll 1  $).  From the theory of $U$-statistics the crossover in the nature of the bounds, is connected to the different nature of the two components in the canonical decomposition for $U$-statistics. Under the dense regime the biggest source of error comes from the term $U_{n,1}$ while in the sparse regime the biggest source of error comes from the term $U_{n,2}$. The two variables $U_{n,1}$ and $U_{n,2}$ exhibit a different nature. In fact, one can think that $U_{n,1}$ is a global quantity as it is the sum of averages, while $U_{n,2}$ is simply the sum of pure interactions concentrating on the boundary of the set $\Omega$. We believe that there is a deeper geometric and analytic reason for the scaling of the error that appears in the sparse regime ($ \frac{1}{n^{2/(d+1)}} \ll\veps_n \ll \frac{1}{n^{1/d}} $). Moreover, we believe that such understanding would allow us to complete the  study of the asymptotic distribution in Theorem \ref{AsympDistribFirstRegime} for the sparse regime. We expect such distribution to be of the Gaussian chaos type.
%This essentially occurs due to the local behavior of the correlations of the summands in \eqref{eq:tri_tv}.  Intuitively speaking, this means that if $\veps_n$ goes to zero fast enough then the summands in \eqref{eq:tri_tv} can be treated as uncorrelated. For example, when computing the variance of $\GP{\Omega}$ in the fast-decay regime $\veps_n= n^{- \alpha}$ with $1/d < \alpha \leq 2/(d+1)$, the scaling obtained in Theorem \ref{TheoremGeneral} equals the scaling of the sum of variances of the summands in \eqref{eq:tri_tv} (which can be explicitly computed). The correlation terms are negligible in this situation.  This contrasts with what happens in the slow-decay regime $0< \alpha <1/d $, where the correlation terms are responsible for the value of the variance of $\GP{\Omega}$. Subsection \ref{EmpiricalVariance} provides some numerical experiments to illustrate this phenomenon.
\end{remark}
\medskip

The bias estimates in Appendix \ref{AppendixLemmaBias} are obtained by a series of computations whose starting point is writing $\Pem_\veps(\Omega)$ in terms of an iterated integral, the outer one taken over the manifold $\partial \Omega$ and the inner one taken along the normal line to $\partial \Omega$ at an arbitrary point $x \in \partial \Omega$. Such computations show that the first order term of $\Pem_{\veps_n}(\Omega)$ on $\veps_n$ vanishes. 

Finally, Theorem \ref{AsympDistribFirstRegime} is obtained by using the canonical decomposition of $U$-statistics and by noticing that in the dense regime $\frac{1}{n^{1/d}}\ll \veps_n \ll 1$, the variable $U_{n,2}$ is negligible in relation to $U_{n,1}$. We make use of the CLT for triangular arrays after computing the variances of the involved variables.

\section{Proof of Theorem \ref{TheoremGeneral}} \label{sec:2}
We first compute the moments of $U_{n,1}$ and so we start  computing the quantities $A_{n,1}$ and $B_{n,1}$ from \eqref{ConstantsU1}. Denote by $T_{\veps}$ the $\veps$-\emph{tube} around $\partial \Omega$, that is,  consider the set
 \begin{equation}
  T_{\veps}:= \left\{ x\in \R^d  \: : \: \dist(x,\partial \Omega) \leq \veps \right\}.  
  \label{tube}
  \end{equation}
We also consider the \emph{half tubes} $\tubi$ and $\tubo$,
\begin{equation}
 T_{\veps}^-:=  \left\{ x\in \Omega  \: : \: \dist(x,\partial \Omega) \leq \veps \right\}, \quad  T_\veps^+:= \left\{ x\in \Omega^c  \: : \: \dist(x,\partial \Omega) \leq \veps \right\}.
 \label{halfTube}
\end{equation}
With these definitions it is straightforward to check that  
\begin{equation}
\bar{\phi}_{\veps_n}(x) =
  \begin{cases} 
      \hfill |  B_d(x,\veps_n) \cap \Omega |/ \veps_n^{d+1}    \hfill & \text{ if $x \in T_{\veps_n}^+$ } \\
      \hfill | B_d(x,\veps_n) \cap \Omega^c |/ \veps_n^{d+1}    \hfill & \text{ if $x \in T_{\veps_n}^-$ } \\
      \hfill 0 \hfill & \text{ if $x \not \in T_{\veps_n}$ }. \\
  \end{cases}
  \label{phiAverageDef}
\end{equation}    
Since $| B_d(x,\veps_n) \cap \Omega|$ and $|  B_d(x,\veps_n)\cap \Omega^c | $ are bounded by $\alpha_d \veps_n^d$, where $\alpha_d$ is the volume of the $d$-dimensional unit ball, we deduce that 
$$ A_{n,1} = O\left(\frac{1}{\veps_n}  \right). $$ 

%Now we claim that 
%$$B_{n,1}= O \left( \sqrt{\frac{n}{\veps_n}}   \right).  $$
%To prove the previous expression, we start by proving the following lemma.
In order to compute the quantity $B_{n,1}$ we use the following lemma, whose proof may be found in Appendix \ref{AppendixMomentsPhi}.
\begin{lemma}
Let $p \geq 1$ and let $\Omega  \subseteq \D$, be a set with finite perimeter. Then, for all $\veps>0$ we have
$$  \int_{\D}\bar{\phi}_{\veps_n}^p(x) \Rd x \leq \frac{\alpha_d^{p-1} \sigma_d}{\veps^{p-1}} \Pem(\Omega).   $$
In particular, taking $p=1$ in the previous expression, we obtain
\begin{equation} \label{PeP} 
 \Pem_{\veps}(\Omega) \leq \sigma_d \Pem(\Omega). 
\end{equation}
\label{LemmaMomentsAveragephi}
\end{lemma}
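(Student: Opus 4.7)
The strategy is to reduce the general $p \ge 1$ claim to the special case $p=1$, namely to the inequality \eqref{PeP} itself. The explicit description \eqref{phiAverageDef} expresses $\bar\phi_\veps(x)$ as the volume of a subset of the ball $B(x,\veps)$ divided by $\veps^{d+1}$, hence gives the uniform pointwise bound $\bar\phi_\veps(x) \leq \alpha_d/\veps$ on $D$. Combining this with the Fubini identity $\int_D \bar\phi_\veps(x)\,\Rd x = \Pem_\veps(\Omega)$ (immediate from \eqref{eq:tri_mean} and \eqref{NonLocalPerimeter}), I would write
\begin{equation*}
\int_D \bar\phi_\veps^p(x)\,\Rd x \;\leq\; \left(\frac{\alpha_d}{\veps}\right)^{p-1}\int_D \bar\phi_\veps(x)\,\Rd x \;=\; \frac{\alpha_d^{p-1}}{\veps^{p-1}}\,\Pem_\veps(\Omega),
\end{equation*}
so everything reduces to proving $\Pem_\veps(\Omega) \leq \sigma_d\,\Pem(\Omega)$.

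For this key inequality, I would use a one-dimensional slicing argument in polar coordinates. Changing variables $y = x + r\theta$ with $r \in (0,\veps)$, $\theta \in S^{d-1}$, and interchanging orders of integration,
\begin{equation*}
\Pem_\veps(\Omega) = \frac{1}{\veps^{d+1}}\int_{S^{d-1}}\int_0^\veps r^{d-1}\int_{\{x \in D\,:\,x+r\theta \in D\}} |\1_\Omega(x+r\theta) - \1_\Omega(x)|\,\Rd x\,\Rd r\,\Rd \theta.
\end{equation*}
Since $D$ is convex, every line parallel to $\theta$ meets $D$ in an interval; so the classical one-dimensional BV slicing theorem applied to $\1_\Omega$ yields, for every $\theta \in S^{d-1}$ and every $r > 0$,
\begin{equation*}
\int_{\{x \in D\,:\,x+r\theta \in D\}} |\1_\Omega(x+r\theta) - \1_\Omega(x)|\,\Rd x \;\leq\; r\int_{\partial^{*}\Omega \cap D}|\nu(s)\cdot\theta|\,\Rd \mathcal{H}^{d-1}(s),
\end{equation*}
where $\nu$ denotes the measure-theoretic unit normal to the reduced boundary $\partial^{*}\Omega$. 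Plugging this back in, computing $\int_0^\veps r^{d}\,\Rd r = \veps^{d+1}/(d+1)$, exchanging orders, and using rotational invariance to evaluate $\int_{S^{d-1}}|\theta\cdot\nu|\,\Rd \theta = 2s_{d-2}/(d-1)$ (independent of the unit vector $\nu$), I obtain
\begin{equation*}
\Pem_\veps(\Omega) \;\leq\; \frac{1}{d+1}\cdot\frac{2s_{d-2}}{d-1}\,\Pem(\Omega) \;=\; \sigma_d\,\Pem(\Omega),
\end{equation*}
by the definition \eqref{sigma} of $\sigma_d$ and De Giorgi's identity $\Pem(\Omega) = \mathcal{H}^{d-1}(\partial^{*}\Omega \cap D)$.

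The main obstacle is justifying the slicing step within the \emph{relative} BV framework of \eqref{perimeter}, where admissible vector fields are compactly supported in $D$: a naive extension of $\1_\Omega$ by zero across $\partial D$ followed by the standard translation bound on $\R^d$ would introduce spurious jump contributions on $\partial D$ not controlled by $\Pem(\Omega)$. The domain constraint $\{x, x+r\theta\}\subset D$ inside the inner integral is exactly what avoids this: by convexity of $D$, both endpoints being in $D$ forces the connecting segment to lie in $D$, so only one-dimensional variations of $\1_\Omega|_{L \cap D}$ contribute, and Fubini combined with the BV slicing theorem of \cite{AFP} identifies the total of those variations with the relative directional perimeter $\int_{\partial^{*}\Omega \cap D}|\nu\cdot\theta|\,\Rd\mathcal{H}^{d-1}$.
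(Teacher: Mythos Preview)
Your proof is correct, and while the core mechanism (writing differences as line integrals and extracting $\sigma_d$ from an angular average) is the same as the paper's, the organization is genuinely different in two places. First, to pass from general $p$ to $p=1$ the paper applies Jensen's inequality to the \emph{inner} integral over $B_d(0,1)$ and then uses $|u(x+\veps h)-u(x)|^p \leq |u(x+\veps h)-u(x)|$ for $[0,1]$-valued $u$; you instead use the pointwise bound $\bar\phi_\veps \leq \alpha_d/\veps$ on the already-averaged quantity, which is shorter and avoids invoking the $[0,1]$-valuedness at this stage. Second, for the $p=1$ inequality the paper mollifies $\1_\Omega$ to a smooth $u$, writes $u(x+\veps h)-u(x)=\veps\int_0^1 \nabla u(x+t\veps h)\cdot h\,\Rd t$, and passes to the limit; you bypass the approximation by appealing directly to the one-dimensional BV slicing theorem on the convex domain $D$. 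The mollification route is more self-contained (it needs only calculus and the density of smooth functions in BV), while your slicing route is cleaner once one is willing to quote the structure theorem from \cite{AFP} and handles the relative-perimeter issue with $\partial D$ more transparently via convexity. Either way the constants match exactly, since $\int_{S^{d-1}}|\theta\cdot\nu|\,\Rd\theta = (d+1)\sigma_d$ is just the polar-coordinate unpacking of $\int_{B_d(0,1)}|z_1|\,\Rd z=\sigma_d$ that the paper uses.
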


Using the previous lemma with $p=2$ we deduce that $ \int_{\D}\bar{\phi}_{\veps_n}^2(x) \Rd x = O\left(\frac{1}{\veps_n} \right)  $, and since
$$ \int_{\D} g_{n,1}^2(x) \Rd x = \int_{\D}\bar{\phi}_{\veps_n}^2(x) \Rd x - \left(\Pem_{\veps_n}(\Omega) \right)^2,  $$ 
we conclude that 
$$B_{n,1}= O \left( \sqrt{\frac{n}{\veps_n}}   \right).  $$
%On the other hand, 
%$$
%\int_{\D}\bar{\phi}_{\veps_n}^2(x) \Rd x  = \int_{T_{\veps_n}} \bar{\phi}_{\veps_n}^2(x) \Rd x \leq \frac{\alpha_d^2|\tubn|}{\veps_n^2}.  
%$$
%Using the formula for the volume of the $\veps_n$-tube around $\partial \Omega$ (see \cite{Weyl}), and the fact that $\veps_n \rightarrow 0$ we can assume without the loss of generality that 
%$$|\tubn| \leq 4 \veps_n \Pem(\partial \Omega).$$
From the previous computations, we deduce that
$$  \E(|U_{n,1}|^p) \leq C_{p,d} \max\{ 1 , \BV{\Omega} \}^{p} \nc \max \left(  \frac{1}{n^p\veps_n^p, } , \frac{1}{n^{p/2} \veps_n^{p/2} }  \right), $$
where \nc$C_{p,d}$ depends on $p$ and $d$, but is independent of the set $\Omega$. If $\frac{1}{n^{2/(d+1)}} \leq \veps_n$, so that in particular $\frac{1}{n \veps_n}$ is $o(1)$, then  
\begin{equation}
\E(|U_{n,1}|^p) \leq  \frac{C_{p,d} \max\{ 1 , \BV{\Omega} \}^{p} \nc}{n^{p/2} \veps_n^{p/2} }  .   
\label{MomentsUn1Final}
\end{equation}

\begin{remark}
 Later on, in Lemma \ref{LemmaVarianceg}, we provide an explicit computation of $\int_{\D}\bar{\phi}_{\veps_n}^2(x) \Rd x $ up to order $\frac{1}{\veps_n}$ , which is useful when studying the asymptotic distribution of a rescaled version of $U_{n,1}$.
\end{remark}

Now we turn to the task of obtaining moment estimates for $U_{n,2}$. We estimate the quantities $A_{n,2}$, $B_{n,2}$ and $C_{n,2}$ from \eqref{ConstantsU2}. Let us start by estimating $A_{n,2}$. Note that for any $(x,y) \in \D \times \D$,  $\bar{\phi}_{\veps_n}(x)$ and $\bar{\phi}_{\veps_n}(y)$ are of order $\frac{1}{\veps_n}$  and that $\Pem_{\veps_n}(\Omega)$ is of order one. Thus, it is clear from the definition of $g_{n,2}$ in \eqref{CanonicalDecomp1} that
$$ A_{n,2} = O\left(\frac{1}{\veps_n^{d+1}}\right).  $$ 
On the other hand, using $\phi_{\veps_n}^2(x,y) = \frac{1}{\veps_n^{d+1}} \phi_{\veps_n}(x,y)$, we obtain that for every $x \in \D$, 
%
%$$A_{n,2}:= ||g_{n,2}||_{\infty},   $$ 
%$$B_{n,2} := n || g_{n,2}||_2,  $$
%$$C_{n,2}^2:= n || \int_{\D}g_{n,2}^2(\cdot,y ) \Rd y   ||_\infty. $$
%
%In fact, it follows from the results in \cite{GineLatala} that for every $p \geq 1$ 
%\begin{equation}
%E\left(| U_{n,2}|^p \right) \leq  \frac{C_p}{n^{2p}} \max\left(   A_{n,2}^p  , B_{n,2}^p   , C_{n,2}^p  \right),
%\label{MomentsUn2}
%\end{equation} 
%where $C_p$ is a universal constant. As remarked in \cite{GineLatala}, a careful study of the constants in the above expression, actually provide us with exponential decay estimates of the Bernstein type for canonical $U$-statistics of order two. 
\begin{align}  
\begin{split}
\int_{\D}g_{n,2}^2(x,y) \Rd y = & \int_{\D} \phi_{\veps_n}^2(x,y) \Rd y - \bar{\phi}_{\veps_n}^2(x) + 2 \theta_n \overline{\phi}_{\veps_n}(x) \\
 & -  2 \! \int_{\D}\phi_{\veps_n}(x,y)\bar{\phi}_{\veps_n}(y) \Rd y + \! \int_{\D} \bar{\phi}_{\veps_n}^2(y) \Rd y - \theta_n^2 \\
 = & \frac{1}{\veps_n^{d+1}} \bar{\phi}_{\veps_n}(x) - \bar{\phi}_{\veps_n}^2(x) + 2 \theta_n \overline{\phi}_{\veps_n}(x) \\
&   - 2 \int_{\D}\phi_{\veps_n}(x,y)\bar{\phi}_{\veps_n}(y) \Rd y + \int_{\D} \bar{\phi}_{\veps_n}^2(y) \Rd y -\theta_n^2, 
\label{aux1}
\end{split}
 \end{align}
 where we are using $\theta_n := \Pem_{\veps_n}(\Omega)$. From this, it follows that
$$ C_{n,2}=O \left( \sqrt{ \frac{n}{\veps_n^{d+2}} }\right).   $$
Finally, upon integration of \eqref{aux1} and direct computations, we obtain
$$ || g_{n,2}||_{2}^2 = \frac{\theta_n}{\veps_n^{d+1}} - 2 \int_{\D}\bar{\phi}_{\veps_n}^2(y)\Rd y + \theta_n^2,    $$
which implies that
$$ B_{n,2} = O \left(\frac{n}{\veps_n^{(d+1)/2}}   \right). $$
Thus, from \eqref{MomentsUn2} we deduce that
$$ \E(|U_{n,2}|^p) \leq K_{p,d} \max\left( \frac{1}{n^{2p}\veps_n^{p(d+1)}}  ,  \frac{1}{n^p\veps_n^{p(d+1)/2}}, \frac{1}{n^{3p/2}\veps_n^{p(d+2)/2}}  \right), $$
where $K_{p,d} = C_{p,d} ( \max\{1,\BV{\Omega} \} )^{p}$ for $C_{p,d}$ some constant that does not depend on the set $\Omega$\nc. Hence, if $\frac{1}{n^{2/(d+1)}} \leq \veps_n$, we have
\begin{equation}
  \E(|U_{n,2}|^p) \leq  \frac{K_{p,d}}{n^p\veps_n^{p(d+1)/2}}.
  \label{MomentsUn2Final}
\end{equation}
Combining \eqref{MomentsUn1Final} and \eqref{MomentsUn2Final} and using the canonical decomposition \eqref{CanonicalDecomp}, we obtain \eqref{MomentsTheorem}.

 \subsection{Sharpness of the Rate in Theorem \ref{TheoremGeneral}} \label{sec:opt}
A very simple argument shows that the rates for $\veps_n$ that guarantee the almost sure convergence of the graph perimeter to the actual perimeter in Theorem \ref{TheoremGeneral} are optimal in terms of scaling, up to logarithmic corrections. 

%In fact if $n^{2}\veps^{d+1}_n \to \infty$ then for any $\gamma > 0$ we have that
%$$
%\Prob\left( |\GP{\Omega} - \E(\GP{\Omega})| \geq \gamma \right) = o(1)
%$$
%by a direct application of Theorem \ref{TheoremGeneral}. In other words, $\GP{\Omega} \to \sigma_{d} \Pem(\Omega)$ in probability.
In fact, suppose $n^{2}\veps^{d+1}_n = o(1)$ and let $e_{n}$ denote the random variable that counts the number of edges that cross in the interface between $\Omega$ and its complement. In other words, we define
$$
e_{n} :=  \veps_n^{d+1}\sum^{n}_{i=1} \sum^{n}_{j=i+1} \phi_{\veps_n}(\x_i,\x_j).
$$
As a consequence, if $\Omega$ has finite perimeter then we have
\begin{equation}
\mathrm{GPer}_{n,\veps_n}(\Omega) = \frac{2}{n (n-1) \veps^{d+1}_n} e_n, \qquad \E(e_n) = \frac{ n (n-1) \veps^{d+1}_n }{2}  \Pem_{\veps_n}(\Omega).
\label{Expecesubn}
\end{equation}

Note that $e_n$ takes integer values in the range $\{0,1,\ldots,N\}$ for $N = n(n-1)/2,$ so that
$$
\E(e_n) = \sum^{N}_{k=1} \; k p^{n}_{k} \qquad p^{n}_{k} := \Prob( e_{n} = k ).
$$
The fact that $p^{n}_{0} + \cdots + p^{n}_{N} = 1$ implies
$$
\E(e_n) =  \sum^{N}_{k=1} kp^{n}_{k} \geq  \sum^{N}_{k=1} p^{n}_{k}  =(1-p^{n}_{0}).
$$
In particular, from \eqref{Expecesubn} and \eqref{NonLocalpoint-wiseConv} we deduce that if $n^{2}\veps^{d+1}_{n} \to 0$ and $\Omega$ has finite perimeter then
$$
(1-p^{n}_{0}) \leq \E(e_{n}) = o(1).
$$
On the other hand, note that for any given $\gamma>0$ it is true that $\GPem_{n,\veps_n}(\Omega)  > \gamma $ implies that $e_n \not =0$. In turn
$$ \Prob \left( \GPem_{n,\veps_n}(\Omega)  > \gamma   \right) \leq \Prob  \left( e_n\not =0  \right)  = 1- p_0^n =o(1).  $$
We conclude that if $n^{2}\veps^{d+1}_{n} \to 0$ then $\GPem_{n,\veps_n}(\Omega) $ converges in probability to zero. Therefore, if $\Omega$ has a non-zero, finite perimeter then $\GPem_{n,\veps_n}(\Omega) $ does not converge to $\sigma_{d} \Pem(\Omega)$ in probability (nor almost surely, either).
%If $\gamma < \sigma_{d} \Pem(\Omega)$ then for all $n$ sufficiently large we have
%$$
%\Prob\left(|\GPem_{\veps_n}(\Omega) - \E( \GPem_{\veps_n}(\Omega) )| \geq \gamma  \right) \geq p^{n}_{0} \to 1.
%$$
%In particular, $\GPem_{\veps_n}(\Omega)$ cannot converge 
% Thomas
%hat is the threshold $\alpha$ for which the number of edges crossing the interface decreases as the $n$ increases? If it seems that modulo a little work it wouldn't be too hard to show that it is  $\alpha=2/(d+1)$, therefore making the result sharp.
% Indeed, let $e_n$ be the number of edges crossing the interface, then
% $$
% \GPem^\alpha_n=  \frac{2}{n^2\veps^{d+1}_n} e_n = \frac{2}{n^2 n^{-\alpha(d+1)}} e_n = 2 \frac{e_n}{n^{2-\alpha(d+1)}}  
% $$
% So if $\GPem^\alpha_n$ converges, then for large $n$ 
% $$
% e_n \sim n^{2-\alpha(d+1)}.
% $$
% This is what we observe experimentally in Figure \ref{cutfig} (zoom to see that the slope are exactly  $2-3\alpha$, that is $0.8$, $0.5$, $0.2$ and $-0.1$).
% 
% 
% Note that $2-\alpha(d+1) < 0$ if and only if  $\alpha >  2/(d+1)$.  We do the proof by contradiction. Assume   $\alpha>2/(d+1)$ and  $\GPem^\alpha_n$ converge. Then $n^{2-\alpha(d+1)}$ goes to zeros and so does $e_n$, but if $e_n$ goes to zeros there is no hope for $\GPem^\alpha_n$ to converge \ldots 
%

\section{Proof of Theorem \ref{AsympDistribFirstRegime}}
\label{ProofTheoremDistrib}

The proof of Theorem \ref{AsympDistribFirstRegime} relies on the following lemma, whose proof may be found in Appendix \ref{AppendixVarianceg}.
\begin{lemma}
Asssume that $\veps_n \rightarrow 0$ as $n \rightarrow \infty$, and that $\Omega \subset D$ is an open set with smooth boundary so that $ \mathrm{dist}(\Omega,\partial D) > 0$. Then
\begin{equation}
\Var(g_{n,1}(X_1))= \frac{C_d \Pem(\Omega)}{\veps_n} + O(1),
\label{Varianceg1}
\end{equation}
where $C_d$ is given by \eqref{Cd}.
\label{LemmaVarianceg}
\end{lemma}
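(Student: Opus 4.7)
The plan is to reduce the variance computation to the explicit integral $\int_D \bar{\phi}_{\veps_n}^2(x)\, dx$ and evaluate it to leading order via a tubular-neighborhood parametrization of the $\veps_n$-tube around $\partial\Omega$.

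First, since $\E(g_{n,1}(X_1)) = 0$ and $X_1$ is uniform on $D$, we have
\[
\Var(g_{n,1}(X_1)) \;=\; \int_D \bar{\phi}_{\veps_n}^2(x)\, dx \;-\; \Pem_{\veps_n}(\Omega)^2.
\]
By \eqref{PeP}, $\Pem_{\veps_n}(\Omega) \le \sigma_d \Pem(\Omega) = O(1)$, so the second term is $O(1)$. Thus the task is to show
\[
\int_D \bar{\phi}_{\veps_n}^2(x)\, dx \;=\; \frac{C_d \Pem(\Omega)}{\veps_n} + O(1).
\]

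Using the explicit form of $\bar{\phi}_{\veps_n}$ in \eqref{phiAverageDef}, the integral splits into contributions over $T_{\veps_n}^+$ (where the integrand is $|B_d(x,\veps_n)\cap\Omega|^2/\veps_n^{2(d+1)}$) and over $T_{\veps_n}^-$ (with $\Omega^c$ in place of $\Omega$); outside $T_{\veps_n}$ the integrand vanishes. Since $\dist(\Omega,\partial D) > 0$ and eventually $\veps_n$ is smaller than both $\dist(\Omega,\partial D)$ and the reach of $\Omega$, the map $(y,s) \mapsto y + s\nu(y)$ is a diffeomorphism from $\partial\Omega \times (-\veps_n,\veps_n)$ onto $T_{\veps_n}$, with Jacobian $J(y,s) = \prod_{i=1}^{d-1}(1 - s\kappa_i(y)) = 1 + O(\veps_n)$ uniformly in $(y,s)$, where $\kappa_i$ are the principal curvatures of $\partial\Omega$.

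The key geometric step is that for $x = y + s\nu(y)$ with $s \in (0,\veps_n)$, approximating $\partial\Omega$ near $y$ by its tangent hyperplane gives
\[
\bigl|B_d(x,\veps_n) \cap \Omega\bigr| \;=\; \veps_n^d\, h\!\left(\tfrac{s}{\veps_n}\right) + O(\veps_n^{d+1}),
\qquad h(t) := \bigl|B_d(0,1) \cap \{z_d \geq t\}\bigr|,
\]
where the $O(\veps_n^{d+1})$ error comes from the second-order Taylor expansion of $\partial\Omega$ (the first-order curvature correction to the lens-shaped region), with implicit constant controlled by the curvature bounds on $\partial\Omega$. Squaring, dividing by $\veps_n^{2(d+1)}$, and using $h(s/\veps_n) = O(1)$, we get
\[
\frac{|B_d(x,\veps_n)\cap\Omega|^2}{\veps_n^{2(d+1)}} \;=\; \frac{h(s/\veps_n)^2}{\veps_n^2} + \frac{O(1)}{\veps_n}.
\]
Integrating in $(y,s)$ with the Jacobian, and changing variables $t = s/\veps_n$,
\[
\int_{T_{\veps_n}^+} \bar{\phi}_{\veps_n}^2\, dx \;=\; \frac{\Pem(\Omega)}{\veps_n}\int_0^1 h(t)^2\, dt \;+\; O(1),
\]
since both the lower-order term in the integrand (contributing $O(1)$ after integrating over $s$ of length $\veps_n$ and over $\partial\Omega$ of finite area) and the $O(\veps_n)$ Jacobian correction (multiplying the $1/\veps_n$ leading term) are $O(1)$. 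The integral over $T_{\veps_n}^-$ is handled identically: for $x = y + s\nu(y)$ with $s \in (-\veps_n,0)$ we have $|B_d(x,\veps_n)\cap\Omega^c| = \veps_n^d h(|s|/\veps_n) + O(\veps_n^{d+1})$, and the same computation yields another $\Pem(\Omega) \int_0^1 h(t)^2\, dt / \veps_n + O(1)$. Adding the two contributions and recalling $C_d = 2\int_0^1 h(t)^2\, dt$ finishes the proof.

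The main technical obstacle is a clean justification of the tangent-plane approximation with an $O(\veps_n^{d+1})$ error in the volume of $B_d(x,\veps_n)\cap\Omega$ uniformly over $y \in \partial\Omega$ and $|s| \le \veps_n$. This is a standard Taylor-expansion argument in a tubular neighborhood, but it requires that the constants depend only on the reach of $\Omega$ and the $C^2$ norm of $\partial\Omega$, which are available under the smoothness hypothesis. Everything else is bookkeeping of error terms that are safely $O(1)$ once divided by the leading $1/\veps_n$ scale.
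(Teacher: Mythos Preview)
Your proposal is correct and follows essentially the same approach as the paper: reduce to computing $\int_D \bar{\phi}_{\veps_n}^2$, split into the two half-tubes via \eqref{phiAverageDef}, pass to tubular-neighborhood coordinates with Jacobian $1+O(\veps_n)$, approximate the lens volume by the flat-boundary model $\veps_n^d\,h(s/\veps_n)+O(\veps_n^{d+1})$ using the local graph expansion of $\partial\Omega$, and integrate. Your function $h(t)=|B_d(0,1)\cap\{z_d\ge t\}|$ is exactly the quantity $A_t$ appearing in the paper's computation, and your error bookkeeping matches the paper's.
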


Now we turn our  attention to the proof of Theorem \ref{AsympDistribFirstRegime}.

\begin{proof}[Proof of Theorem \ref{AsympDistribFirstRegime}]
Note that from \eqref{CanonicalDecomp}, \eqref{CanonicalDecomp1} and \eqref{CanonicalDecomp2} we obtain
\begin{align*}
 \sqrt{ \frac{n \veps_n }{4 C_d \Pem(\Omega)} } \left( \GPe{\Omega} - \Pem_{\veps_n}(\Omega)  \right) = & \sqrt{ \frac{ \veps_n }{n C_d \Pem(\Omega)} } \sum_{i=1}^{n} g_{n,1}(\x_i)  \\
 & + \sqrt{ \frac{n \veps_n }{4 C_d \Pem(\Omega)} }U_{n,2}.     
\end{align*}
From the moment estimates \eqref{MomentsUn2Final}, we deduce that
\begin{equation} 
\sqrt{ \frac{n \veps_n }{4 C_d \Pem(\Omega)} } U_{n,2} \overset{P}{\rightarrow} 0. 
\label{SecondOrderStatisticVanishes}
\end{equation}
On the other hand, we note that from \eqref{Varianceg1} 
$$  \frac{\error(g_{n,1}(\x_1)) \sqrt{\veps_n} }{\sqrt{C_d \Pem(\Omega)}} \rightarrow 1 , \quad \text{as } n \rightarrow \infty, $$
where $\error(g_{n,1}(\x_1)) $ is the standard deviation of $g_{n,1}(\x_1)$. Lyapunov's condition which is sufficient to allow us to use the central limit theorem for triangular arrays  is easily checked from  Lemma \ref{LemmaMomentsAveragephi}. We deduce that
$$  \sqrt{ \frac{ \veps_n }{n C_d \Pem(\Omega)} } \sum_{i=1}^{n} g_{n,1}(X_i) \overset{w}{\longrightarrow}N(0,1). $$  
Combining with \eqref{SecondOrderStatisticVanishes} and the Slutsky's theorem, we obtain the desired result.
Finally, to obtain the last statement in the theorem, we note that from the bias estimates in Lemma \ref{pnlp},
$$  \sqrt{n \veps_n } | \Pem_{\veps_n}(A) - \sigma_d \Pem(A) |  = O(n^{1/2}\veps_n^{5/2})  $$
Under the condition \eqref{conditionBiased}, we conclude that $ \sqrt{n \veps_n } | \Pem_{\veps_n}(A) - \sigma_\eta \Pem(A) | \rightarrow 0$. This implies \eqref{eqn2}.
\end{proof}

%%%%%%%%%%%%%%%%%%%%%%%%%%%%%%%%%
\subsection{Application to Perimeter Testing} \label{TestingSec}
Here we prove Proposition \ref{hyprop}.
%We use the asymptotic result from Theorem \ref{AsympDistribFirstRegime} to give the insights about the test \eqref{hypothesisTestGPer}. 
We assume that $\Omega \subseteq \D$ is an open set with smooth boundary such that $\overline \Omega \subset \D$. Note that under the null hypothesis,  if $l_n > Z_\alpha$, then, 
\begin{equation}
 Z_\alpha <  l_n \leq  \sqrt{\frac{n \veps_n}{4 C_d \Pem(\Omega)}} \left(   \GPe{\Omega} -  \Pem_{\veps_n}(\Omega)  \right),
\end{equation}
where we used that $\Pem_{\veps_n}(\Omega) \leq \sigma_d \Pem(\Omega)$
by Lemma \ref{LemmaMomentsAveragephi}. Thus, using Theorem \ref{AsympDistribFirstRegime}, we deduce that asymptotically, the type I error of our test is
\begin{align*}
  \mathbb{P}_{H_0} \left(  l_n > Z_\alpha  \right) \leq \, & \mathbb{P}\left(  \sqrt{\frac{n \veps_n}{4 C_d \Pem(\Omega)}} \left(   \GPe{\Omega} -  \Pem_{\veps_n}(\Omega)  \right) > Z_\alpha  \right) \\
  & \longrightarrow  \mathbb{P}\left(  N(0,1) > Z_\alpha \right) = \alpha,  
\end{align*}
which establishes the first part of Proposition \ref{hyprop}. 
In order to compute the type II error of our test, suppose that $\Pem(\Omega)=\rho'$ where $\rho' > \rho$. In that case,
\begin{align}
\begin{split}
\mathbb{P}_{H_A}&\left( l_n \leq Z_\alpha \right)=  \mathbb{P}_{H_A}
\left( \sqrt{\frac{n \veps_n}{4 C_d \rho}} (\sigma_d \rho -
\Pem_{n,\veps_n}(\Omega)  ) \geq - Z_\alpha   \right) \\
&= \mathbb{P}_{H_A} \left( \sqrt{\frac{n \veps_n}{4 C_d \rho}}
(\Pem_{\veps_n}(\Omega) - \Pem_{n,\veps_n}(\Omega)  ) \geq  - Z_\alpha   +
\sqrt{\frac{n \veps_n}{4 C_d \rho}}( \Pem_{\veps_n}(\Omega) - \sigma_d
\rho )  \right),
\end{split}
\end{align}
Now recall that $\lim_{n \rightarrow \infty}\Pem_{\veps_n}(\Omega)=  \sigma_d \Pem(\Omega) = \sigma_d \rho' > \sigma_d \rho $. In particular, we deduce that
$$ \lim_{n \rightarrow  \infty} \sqrt{\frac{n \veps_n}{4 C_d \rho}}(\Pem_{\veps_n}(\Omega) - \sigma_d \rho ) = +\infty.    $$
Thus, for large enough $n$,
$$ - Z_\alpha+\sqrt{\frac{n \veps_n}{4 C_d \rho}}( \Pem_{\veps_n}(\Omega)
- \sigma_d \rho ) \geq \frac{1}{2} \sqrt{\frac{n \veps_n}{4 C_d \rho}}(\sigma_d \rho' - \sigma_d \rho ). $$
Hence, for large enough $n$,
$$ \mathbb{P}_{H_A}\left( l_n \leq Z_\alpha \right) \leq \mathbb{P}_{H_A} \left( \sqrt{\frac{n \veps_n}{4 C_d \rho}} (\Pem_{\veps_n}(\Omega) - \Pem_{n,\veps_n}(\Omega)  ) \geq \frac{1}{2} \sqrt{\frac{n \veps_n}{4 C_d \rho}}( \sigma_d \rho' - \sigma_d \rho ).  \right)     $$
Using the moment estimates from Theorem \ref{TheoremGeneral}, and Markov's inequality, we deduce that
$$ \mathbb{P}_{H_A}(l_n \leq Z_\alpha ) = O\left(\frac{1}{\sqrt{n \veps_n}} \right). $$
That is, the type II error is of order $\frac{1}{\sqrt{n \veps_n}}$.

\appendix

\section{Proof of Lemma \ref{pnlp} }
\label{AppendixLemmaBias}
Since $\Omega \subset \subset \dom$ and $\Omega$ has smooth boundary the relative perimeter of $\Omega$ with respect to $\dom$ in the generalized sense \eqref{perimeter} simply corresponds to the usual perimeter of $\partial \Omega$ in the sense that
$$  \Pem(\Omega) =  \int_{\partial \Omega } \; \Rd \mathcal H^{d-1} = \mathcal H^{d-1}(\partial \Omega).$$
Additionally, for all $\veps \leq \delta := \dist(\Omega , \partial \D)$  we have that  
$$\NLTVenon{\Omega}= \frac{2}{\veps^{d+1}}\int_{\Omega} | B_{d}(x,\veps) \cap \Omega^c| \; \Rd x,$$
where $B_{d}(x,r)$ denotes the ball of radius $r$ in $\mathbb{R}^{d}$ centered at $x$ and $\Omega^c$ denotes the complement of $\Omega$ in all of space. Moreover, since $\partial \Omega$ is a compact smooth manifold, we can assume without the loss of generality ( by taking $\veps $ small enough) that for every $x \in T_{\veps}$ there is a unique point $P(x)$ in $\partial \Omega$ closest to $x$. Furthermore, we can assume that the map $P$ is smooth. We may further write 
%
% If we let 
%$$\Omega_\veps:= \left\{  x \in \Omega \: : \: \dist(x, \partial \Omega) \leq \veps \right\}$$
%denote that portion of the $\veps$-tube of $\partial \Omega$ that lies in $\Omega$, then we may further write

$$ \NLTVenon{\Omega} = \frac{2}{\veps^{d+1}} \int_{\tubi}| B_{d}(x,\veps) \cap \Omega^c| \; \Rd x, $$
where $\tubi$ is defined in \eqref{halfTube}. This reformulation makes it natural to write the previous integral as an iterated integral; the outer integral is taken over the manifold $\partial \Omega$ and the inner integral is taken along the normal line to $\partial \Omega$ at an arbitrary point $x$ along the boundary.

To make this idea precise, we first let $\textbf{N}(x)$ denote the outer unit normal to $\partial \Omega$ at $x \in \partial \Omega$ and then consider the transformation $(x,t) \in \partial \Omega \times (0, 1) \mapsto x - t \veps \textbf{N}(x)$ for all $\veps$ sufficiently small. The Jacobian of this transformation equals $\veps \det(I + t \veps \textbf{S}_x)$, where $\textbf{S}_x$ denotes the shape operator (or second fundamental form) of $\partial \Omega$ at $x,$ see \cite{Tubes} for instance. For all $\veps$ sufficiently small, we may therefore conclude that
\begin{equation*}
\frac1{\veps}  \int_{\tubi} \!\! | B_{d}(x,\veps) \cap \Omega^c| \; \Rd x =  \int_{\partial \Omega} \!\!\left(\int_{0}^{1} |B_{d}(x-t\veps \textbf{N}(x),\veps) \cap \Omega^c| \det(I + t\veps\textbf{S}_x) \Rd t\right) \Rd \mathcal{H}^{d-1}(x).
\end{equation*}
As a consequence, we also have that
\begin{equation} \label{perint}
  \NLTVenon{\Omega}  =  \frac{2}{\veps^{d}} \int_{\partial \Omega}\left(\int_{0}^{1} |B_{d}(x- t \veps \textbf{N}(x),\veps) \cap \Omega^c| \det(I + t \veps \textbf{S}_x) \; \Rd t\right) \; \Rd \mathcal{H}^{d-1}(x).  
\end{equation}
With the expression \eqref{perint} in hand, we may now proceed to establish \eqref{errorNonlocal} by expanding $\Pem_\veps(\Omega)$ in terms of $\veps$ and appealing to some elementary computations that show that the first order term in $\veps$ vanishes.

For a fixed $x \in \partial \Omega,$ we first wish to understand the behavior of the function
$$  g_{x}(\veps) := \frac{1}{\veps^d} \left(\int_{0}^{1} |B_{d}(x- t \veps \textbf{N}(x),\veps) \cap \Omega^c| \det(I + t \veps \textbf{S}_{x}) \; \Rd t\right)$$
for $\veps$ in a neighborhood of zero. Without loss of generality, we may assume that $x= 0$, that $\textbf{N}(x)=e_d$ and that around  $x$ the boundary $\partial \Omega$ coincides with the graph $\hat x = (x_1, \dots, x_{d-1}) \mapsto (\hat{x}, f(\hat{x})) \in \mathbb{R}^{d}$  of a smooth function $f(\hat x)$ that satisfies both $f(0)=0$ and $\nabla f (0) =0$ simultaneously. By symmetry of the shape operator $\textbf{S}_{x},$ there exists an orthonormal basis for $\R^{d-1}$ (where we identify $\R^{d-1}$ with the hyperplane $\left\{ (\hat{x},x_d) \: : \: x_d =0 \right\}$) consisting of eigenvectors of the shape operator. We let $v_1, \dots, v_{d-1} $ denote the eigenvectors of $\textbf{S}_{x}$ and $\kappa_1, \dots, \kappa_{d-1}$ the corresponding eigenvalues ( also known as principal curvatures). In particular, whenever $\|\hat{x}\| \leq \veps$ we have that
\begin{equation}
 f(\hat{x}) = \frac{1}{2} \sum_{i=1}^{d-1} \kappa_i \langle \hat x , v_i \rangle ^2  + O(\veps^3)
 \label{SecOrderf}
\end{equation}
where curvatures $\kappa_{i} = \kappa_{i}(x)$ and the $O(\veps^3)$ error term can be uniformly bounded.

With these reductions in place, we first define $u(\hat y) := \sqrt{\veps^2 - \|\hat y\|^2} $ and then let
$$
h(\hat y,t;\veps) := 
\begin{cases}
2u(\hat y) & \text{if} \quad f(\hat y) + \veps t < -u(\hat y) \\
u(\hat y) - \veps t - f(\hat y) & \text{if} \quad -u(\hat y) \leq  f(\hat y) + \veps t  \leq u(\hat y) \\
0 & \text{otherwise}.
\end{cases}
$$
A direct calculation then shows that
\begin{align}\label{eq:this_formula}
|B_{d}(x- t \veps \textbf{N}(x),\veps) \cap \Omega^c| = \int_{B_{d-1}(0,\veps)} h(\hat y,t;\veps) \; \Rd \hat y,
\end{align}
and an application of \eqref{SecOrderf} shows that $h(\hat y,t;\veps) = 2\sqrt{\veps^2 - \|\hat y\|^2}$ only if
$$
\| \hat y\|^2 = \veps^{2} - O(\veps^{4}) \qquad \text{and} \qquad u(\hat y) = O(\veps^2).
$$
It therefore follows that
\begin{align*}
\int_{B_{d-1}(0,\veps) \cap \{ f(\hat y) + \veps t < -u(\hat y) \} }  \!\!\! h(\hat y,t;\veps) \; \Rd \hat y &\leq O(\veps^2) \int_{B_{d-1}(0,\veps) \cap \{ \|\hat y\| \geq \sqrt{\veps^2 - O( \veps^4)} \} } \!\! \Rd \hat y = O(\veps^{d+3}).
\end{align*}
We then let $A^{\veps}_{t}$ denote the set $A^{\veps}_{t} := \{ \hat y \in B_{d-1}(0,\veps): -u(\hat y) \leq  f(\hat y) + \veps t  \leq u(\hat y) \}$ and use the previous estimate in \eqref{eq:this_formula} to uncover
\begin{equation}\label{eq:first_bdry_est}
|B_{d}(x- t \veps \textbf{N}(x),\veps) \cap \Omega^c| = \int_{B_{d-1}(0,\veps) \cap A^{\veps}_{t} } ( u(\hat y) - \veps t - f(\hat y)  )\; \Rd \hat y + O(\veps^{d+3}). 
\end{equation}
We may then note that
$$  \det(I + \veps t \textbf{S}_{x}) = (1+ t \veps \kappa_1)  \dots (1 + t \veps \kappa_{d-1}) = 1 + t \veps H_{x}  + O(\veps^2),   $$
where $H_{x} :=\sum_{i=1}^{d-1}\kappa_i $ represents the mean curvature. Using this fact in \eqref{eq:first_bdry_est} then yields
$$
g_{x}(\veps) = \frac1{\veps^d} \int^{1}_{0} \left(  \int_{B_{d-1}(0,\veps) \cap A^{\veps}_{t} } u(\hat y) - \veps t - f(\hat y) \; \Rd \hat y  \right)(1 +  t \veps H_{x} ) \; \Rd t + O(\veps^2).
$$
Now let $f^{\veps}(z) := \frac1{\veps} f(\veps z)$ and define the corresponding subset $C^{\veps}_{t}$ of $(0,1) \times B_{d-1}(0,1)$ as 
$$C^{\veps}_{t} := \left\{  (t,z) \in (0,1) \times B_{d-1}(0,1): -\sqrt{1 - \|z\|^2 } \leq  f^{\veps}(z) + t  \leq \sqrt{1 - \|z\|^2 }\right\},$$ 
then make the change of variables $\hat y = \veps z$ to see that
$$
g_{x}(\veps) = \int_{C^{\veps}_{t} } \left(\sqrt{1 - \|z\|^2} - t - f^{\veps}(z) \; \right)(1 +  t \veps H_{x} ) \; \Rd z \Rd t + O(\veps^2).
$$
Recalling \eqref{SecOrderf} shows that
\begin{equation}\label{eq:SecOrderf2}
f^{\veps}(z) = \frac{\veps}{2} \sum_{i=1}^{d-1} \kappa_i \langle z , v_i \rangle ^2  + O(\veps^2), 
\end{equation}
which then allows us to obtain an expansion of $g_{x}(\veps)$ in terms of $\veps$ according to the relation
\begin{align}\label{eq:g_expansion}
g_{x}(\veps) &= \int_{C^{\veps}_{t} } \left( \sqrt{1 - \|z\|^2} - t \right)\; \Rd t\Rd z \nonumber \\
& + \veps \int_{C^{\veps}_{t}} \left( tH_x(\sqrt{1 - \|z\|^2} - t) - \frac1{2} \sum_{i=1}^{d-1} \kappa_i \langle z , v_i \rangle ^2 \right) \; \Rd t \Rd z+ O(\veps^2).
\end{align}
The bias estimate \eqref{errorNonlocal} then directly follows after computing each of these terms individually.

We begin by considering the first term in the expansion, i.e.
$$
\mathrm{I} :=\int_{C^{\veps}_{t} } \left(  \sqrt{1 - \|z\|^2} - t  \right) \; \Rd t \Rd z.
$$
Given $\veps > 0$ and $z \in B_{d-1}(0,1)$ define $c(z) := \max\{ -\sqrt{1-\|z\|^2} -  f^{\veps}(z) , 0 \}$  and $C(z) := \min\{ \sqrt{1-\|z\|^2} -  f^{\veps}(z) , 1\},$ so that we may easily write
\begin{align*}
\mathrm{I} =  \int_{ B_{d-1}(0,1) } (C(z)-c(z))\left( \sqrt{1 - \|z\|^{2}} - \frac{C(z) + c(z)}{2}  \right) \;\Rd z.
\end{align*}
As the set where $c(z) \neq 0$ has measure at most $O(\veps^2),$ we easily conclude that
\begin{align*}
\mathrm{I} =  \int_{ B_{d-1}(0,1) } C(z)\left( \sqrt{1 - \|z\|^{2}} - \frac{C(z)}{2}  \right) \;\Rd z + O(\veps^2).
\end{align*}
If $C(z) = 1$ then $\sqrt{1 - \|z\|^{2}} - \frac{C(z)}{2} = \frac1{2}(1 - \|z\|^2) + O(\veps^2)$ as well. In any case, it follows that
\begin{align}\label{eq:first_comp}
\mathrm{I} = \frac1{2} \int_{B_{d-1}(0,1)} (1 - \|z\|^{2}) \; \Rd z + O(\veps^{2}) = \frac{\sigma_d}{2} +  O(\veps^{2}).
\end{align}
We now proceed to compute the second term in the expansion
\begin{align*}
\mathrm{II} := & H_{x} \int_{C^{\veps}_{t}} \left( t \sqrt{1 - \|z\|^2} - t^2 \right) \; \Rd t \Rd z \\
 = & H_{x} \int_{B_{d-1}(0,1)} C^2(z)\left( \frac{\sqrt{1-\|z\|}}{2} - \frac{C(z)}{3} \right) \; \Rd z + O(\veps^2)
\end{align*}
and the third term in the expansion
$$
\mathrm{III} := \frac1{2} \sum_{i=1}^{d-1} \kappa_i \int_{C^{\veps}_{t}} \langle z , v_i \rangle ^2 \; \Rd t \Rd z =  \frac1{2} \sum_{i=1}^{d-1} \kappa_i \int_{B_{d-1}(0,1)} \langle z , v_i \rangle ^2 C(z) \; \Rd z + O(\veps^2)
$$
in a similar fashion. We always have $C(z) = \sqrt{1-\|z\|^2} + O(\veps)$, so that
\begin{align}\label{eq:sec_comp}
\mathrm{II} = & \frac{H_{x}}{6} \int_{B_{d-1}(0,1)}  \!(1-\|z\|^2)^{3/2} \; \Rd z + O(\veps) \\ 
= & \frac{H_{x}\mathrm{vol}(\mathcal{S}^{d-2})}{6} \int^{1}_{0} (1- r^2)^{3/2}r^{d-2} \; \Rd r + O(\veps)
\end{align}
The third term follows similarly by appealing to spherical coordinates, in that we have
\begin{align*}
\mathrm{III} &=  \frac1{2} \sum^{d-1}_{i=1} \kappa_i \int_{B_{d-1}(0,1)} \sqrt{1-\|z\|^2}   \langle z , v_i \rangle ^2 \; \Rd z + O(\veps) \\
&=  \frac{H_x\mathrm{vol}(\mathcal{S}^{d-2})}{2(d-1)} \int^{1}_{0}  \sqrt{1- r^2} r^{d} \; \Rd r + O(\veps) = \mathrm{II} + O(\veps)
\end{align*}
thanks to an integration by parts in the final term. We therefore have that $\mathrm{I} = \sigma_{d}/2 + O(\veps^2)$ and $\mathrm{II} - \mathrm{III} = O(\veps),$ so that $g_{x}(\veps) = \sigma_{d}/2 + O(\veps^{2})$ and
\[ \NLTVenon{\Omega} = 2 \int_{\partial \Omega} \; g_{x}(\veps) \; \Rd \mathcal{H}^{d-1} = \sigma_{d} \Pem(\Omega) + O(\veps^{2})\]
as desired.

We may also show that when $\Omega$ is a fixed ball, say $\Omega = B_{d}(x_{c}, \frac13)$ for $x_c \in \mathbb{R}^{d}$ the center point of $\dom$, that the absolute value of the difference between $\NLTVenon{\Omega}$ and $\sigma_d \Pem(\Omega)$ remains bounded from below by $c\veps^2$ for $c > 0$ some positive constant. The proof proceeds similarly to the proof of the bias estimate above. In particular, this shows that the bound in Lemma \ref{pnlp} is optimal in terms of scaling for  general sets with smooth boundary.
\section{Proof of Lemma \ref{LemmaMomentsAveragephi}}
\label{AppendixMomentsPhi}

The proof follows the same argument used in the proof of Theorem 4.1 in \cite{GTS} or Theorem 6.2 in \cite{AB2}. We assume that $\dist(\Omega , \partial \D ) >0$. Such assumption implies that the perimeter of $\Omega$ with respect to $\D$, that is $\Pem(\Omega)$ defined in \eqref{perimeter}, is equal to the perimeter of $\Omega$ with respect to $\R^d$.  We remark that a slight modification of the argument we present below proves the result in the general case and hence we omit the details (see for example the proof of Theorem 4.1 in \cite{GTS}).

First we prove that for any function $u : \R^d \rightarrow [0,1]$ with $u \in W^{1,1}(\R^d) \cap C^\infty(\R^d)$ and for all $\veps>0$ we have 
\begin{equation}
\int_{\R^d}\left( \int_{\R^d} \frac{\1_{\|x-y\|\leq \veps}}{\veps^{d+1}}|u(y)- u(x)| \Rd y \right)^p \Rd x \leq \frac{\alpha_d^{p-1} \sigma_d}{\veps^{p-1}}  \int_{\R^d} \|\nabla u (x)\| \Rd x,
\label{aux10}
\end{equation}
Inequality \eqref{aux10} follows from
\begin{align*}
\begin{split}
\int_{\R^d} & \left( \int_{\R^d} \frac{\1_{\|x-y\|\leq \veps}}{\veps^{d+1}}|u(y)- u(x)| \Rd y \right)^p  \Rd x \\ 
& = \frac{1}{\veps^{p}} \int_{\R^d} \left(\int_{B_d(0,1)}| u(x+ \veps h) - u(x)|      \Rd h \right)^p \Rd x  \\
& \leq  \frac{\alpha_d^{p-1}}{\veps^p} \int_{\R^d}    \int_{B_d(0,1)} | u(x + \veps h ) - u(x)  |^p \Rd h \,   \Rd x  \\
& \leq  \frac{\alpha_d^{p-1}}{\veps^p} \int_{\R^d}    \int_{B_d(0,1)} | u(x + \veps h ) - u(x)  | \Rd h   \, \Rd x  \\
& =  \frac{\alpha_d^{p-1}}{\veps^{p-1}} \int_{\R^d}    \int_{B_d(0,1)} \left \lvert \int_{0}^{1} \nabla u(x+ t\veps h) \cdot  h \Rd t  \right \rvert \Rd h  \,  \Rd x  \\
& \leq  \frac{\alpha_d^{p-1}}{\veps^{p-1}} \int_{\R^d}    \int_{B_d(0,1)} \int_{0}^{1} \lvert \nabla u(x+ t\veps h)  \cdot h \rvert \Rd t  \, \Rd h    \, \Rd x \\
& =  \frac{\alpha_d^{p-1}}{\veps^{p-1}}  \int_{0}^{1}    \int_{B_d(0,1)} \int_{\R^d}  \lvert \nabla u (x)  \cdot h \rvert \Rd x    \, \Rd h \, \Rd t     \\
& =  \frac{\alpha_d^{p-1}}{\veps^{p-1}}  \int_{0}^{1}    \int_{\R^d}  \lVert \nabla u(x)\rVert \int_{B_d(0,1)}  \left \lvert \frac{\nabla u (x)}{\lVert \nabla u (x)\rVert}  \cdot h \right \rvert \Rd h  \,   \Rd x \,  \Rd t     \\
& =  \frac{\alpha_d^{p-1} \sigma_d}{\veps^{p-1}} \int_{\R^d} \| \nabla u (x) \| \Rd x
\end{split}
\end{align*}
where in the first equation we used the change of variables $h = \frac{x-y}{\veps}$, in the first inequality we used Jensen's inequality and in the second inequality the fact that $u$ takes values in $[0,1]$. 

Now, for any set $\Omega \subseteq \D$ as in the statement, we can find a sequence of functions $\left\{ u_k \right\}_{k \in \NN}$ with  $u_k: \R^d \rightarrow [0,1]$ , $u_k \in W^{1,1}(\R^d) \cap C^\infty(\R^d)$ and such that
\begin{equation}
 u_k \overset{L^1(\R^d)}{\longrightarrow} \1_{\Omega}, \quad
\lim_{k \rightarrow \infty} \int_{\R^d} \| \nabla u_k(x) \| \Rd x = \Pem(\Omega). 
\label{aux1molli}
\end{equation}
Such sequence can be obtained for example with the aid of standard mollifiers (see Theorem 13.9 in \cite{Leoni}). It follows from \eqref{aux10} and from \eqref{aux1molli} that 
\begin{equation*}
\int_{\R^d}\left( \int_{\R^d} \frac{\1_{\|x-y\|\leq \veps}}{\veps^{d+1}}|\1_\Omega(y)- \1_\Omega(x)| \Rd y \right)^p \Rd x \leq \frac{\alpha_d^{p-1} \sigma_d}{\veps^{p-1}}  \Pem(\Omega).
\end{equation*}
Finally, notice that
$$\int_{\D} \bar{\phi}^p_{\veps_n}(x) \Rd x  \leq   \int_{\R^d}\left( \int_{\R^d} \frac{\1_{\|x-y\|\leq \veps}}{\veps^{d+1}}|\1_\Omega(y)- \1_\Omega(x)| \Rd y \right)^p \Rd x \leq \frac{\alpha_d^{p-1} \sigma_d}{\veps^{p-1}}  \Pem(\Omega).
  $$

\section{Proof of Lemma \ref{LemmaVarianceg}}
\label{AppendixVarianceg}
The proof is based on similar computations to the ones in Appendix \ref{AppendixLemmaBias} and thus we simply highlight the main ideas. First of all note that
\begin{equation*}
\Var  \left(g_{n,1}(X_1) \right)= \int_\D \bar{\phi}_{\veps_n}^2(x) dx - \left( \Pem_{\veps_n}(\Omega) \right)^2.
\end{equation*}
Since $\Pem_{\veps_n}(\Omega)=O(1)$, our task reduces to computing the integral in the above expression. It follows from \eqref{phiAverageDef} that for all $\veps_n $ small enough (so that $T_{\veps_n} \subseteq \D$),
$$  \int_\D \bar{\phi}_{\veps_n}^2(x) \Rd x  =   \frac{1}{\veps^{2(d+1)}_n}   \int_{\tubni}|B_{d}(x,\veps_n) \cap \Omega^c |^2 \Rd x +\frac{1}{\veps^{2(d+1)}_n}  \int_{\tubno}|B_{d}(x,\veps_n) \cap \Omega |^2 \Rd x.   $$
We compute the first of the integrals from the above expression.  As in the proof of Lemma \ref{pnlp}, we write
\begin{align*}
\frac1{\veps^{2(d+1)}_n} &  \int_{\tubni}| B_{d}(x,\veps_n) \cap \Omega^c|^2 \; \Rd x \\
& = \frac{1}{\veps_n}  \int_{\partial \Omega}\left( \frac{1}{\veps^{2d}_n}\int_{0}^{1} |B_{d}(x-t\veps_n \textbf{N}(x),\veps_n) \cap \Omega^c|^2 \det(I + t\veps_n\textbf{S}_x) \; \Rd t\right) \; \Rd \mathcal{H}^{d-1}(x).
\end{align*}
For $x \in \partial \Omega$ we study the expression
$$h_{\veps_n}(x) := \int_{0}^{1} \frac{| B_d(x- t \veps_n \textbf{N}(x)  , \veps_n) \cap \Omega^c|^2  }{\veps_n^{2d}} \det(I + t \veps_n  \textbf{S}_x) \Rd t.$$
Note that $\det(I + t \veps_n \textbf{S}_x) = (1 + t\veps_n \kappa_1) \cdot\cdot\cdot (1+ t \veps_n \kappa_{d-1}) = 1 + O(\veps_n) $, where $\kappa_1,\dots, \kappa_{d-1}$ are the principal curvatures (eigenvalues of the shape operator $\textbf{S}_x$). Hence, 
$$  h_{\veps_n}(x) = \int_{0}^{1} \frac{| B_d(x - t \veps_n \textbf{N}(x), \veps_n) \cap \Omega^c  |^2}{\veps_n^{2d}} \Rd t  + O(\veps_n) . $$ 
Without loss of generality, we may assume that $x= 0$, that $\textbf{N}(x)=e_d$ and that around  $x$ the boundary $\partial \Omega$ coincides with the graph $\hat x = (x_1, \dots, x_{d-1}) \mapsto (\hat{x}, f(\hat{x})) \in \mathbb{R}^{d}$  of a smooth function $f(\hat x)$ that satisfies both $f(0)=0$ and $\nabla f (0) =0$ simultaneously, and we denote by $v_1, \dots , v_{d-1}$ the eigenvectors of $\textbf{S}_x$ (just as in Appendix \ref{AppendixLemmaBias}). Then,
 $f : B_{d-1}(0,\veps_n) \rightarrow \R$, satisfies  $f(\hat{x}) = \sum_{i=1}^{d-1}\kappa_i \left<  \hat{x}, v_i\right> ^2 + O(\veps_n^3)  $.   

Now for fixed $t \in [0,1]$ we define $H_{1-t}$ to be the hyperplane
$$ H_{1-t} := \left\{ x= (\hat{x}, x_d) \: : \: x_d \geq 1-t \right\}, $$  
and we let
$$ A_t:= |B_d(0,1) \cap H_{1-t} |. $$
With these definitions we can write
\begin{align}
\begin{split}
 | B_d(x - t \veps_n \textbf{N}(x), \veps_n) \cap \Omega^c  |&= \veps_n^d A_{t}  +\sum_{i=1}^{d-1} \kappa_i \int_{B_{d-1}(0,\veps_n \sqrt{1-t})}   \left<  \hat{x}, v_i\right> ^2 \Rd \hat{x} + O(\veps_n^{d+2}) \\
 &= \veps_n^d A_t  +  \frac{\sum_{i=1}^{d-1} \kappa_i}{d-1}  \int_{B_{d-1}(0,\veps_n \sqrt{1-t})}   \|\hat{x}\|^2 \Rd \hat{x} + O(\veps_n^{d+2})\\
 &= \veps_n^d A_{t} + O(\veps_n^{d+1}),
\end{split}
\end{align}
where the second equality holds due to symmetry, and where the last equality follows after computing $\int_{B_{d-1}(0,\veps_n \sqrt{1-t})}   \|\hat{x}\|^2 \Rd \hat{x}$  using polar coordinates. From the above, it follows that
$$ \frac{ | B_d(x - t \veps_n \textbf{N}(x), \veps_n) \cap \Omega^c  |^2}{\veps_n^{2d}} = A_{t}^2 + O(\veps_n). $$
We conclude that 
$$h_{\veps_n}(x) =   \int_{0}^{1}A_{t}^2 dt + O(\veps_n). $$
Therefore, 
$$ \frac1{\veps_n^{2(d+1)}}  \int_{T_{\veps_{n}}^-}| B_{d}(x,\veps_n) \cap \Omega^c|^2 \; \Rd x   = \frac{1}{\veps_n} \int_{0}^{1}A_{t}^2 \Rd t  \Pem(\Omega) + O(1).  $$
Analogously, we can obtain a similar expression for $\frac1{\veps_n^{2(d+1)}}  \int_{T_{\veps_{n}}^+}| B_{d}(x,\veps_n) \cap \Omega|^2 \; \Rd x$ and from this we deduce \eqref{Varianceg1}.

%Thus, for $x \in \partial \Omega$ we want to compute the expression
%\begin{equation}
% J_\veps(x):= \frac{1}{\veps^{2d}}\int_{0}^{1} |B_{d}(x-t\veps \textbf{N}(x),\veps) \cap \Omega^c|^2 \det(I + t\veps\textbf{S}_x) \; \Rd t
%\label{Integrand1} 
%\end{equation}
%Now from eqn (3.5) we deduce that
%$$  |B_{d}(x-t\veps \textbf{N}(x),\veps) \cap \Omega^c|^2 =\left( \int_{B_{d-1}(0,\veps) \cap A^{\veps}_{t} } u(\hat y) - \veps t - f(\hat y) \; \Rd \hat y \right)^2 + O(\veps^{2d+3})    $$
%and using $\det(I+\veps t \textbf{S}_x) = 1 + t \veps H_x + O(\veps^2)$, we conclude that
%$$ J_\veps(x) = \frac{1}{\veps^{2d}} \int_{0}^{1} \left( \int_{B_{d-1}(0,\veps) \cap A^{\veps}_{t} } u(\hat y) - \veps t - f(\hat y) \; \Rd \hat y \right)^2 (1+ t\veps H_x) \Rd t + O(\veps^2). $$

\subsection*{Acknowledgements}
DS and NGT are grateful to  NSF for its support (grants DMS-1211760 and DMS-1516677). JvB was supported by NSF grant DMS 1312344/DMS 1521138. The authors are grateful to ICERM (supported by NSF grant 0931908), where part of the research was done during the research cluster: \emph{Geometric analysis methods for graph algorithms}.
Furthermore they are grateful to Ki-Net (NSF Research Network Grant RNMS11-07444) for opportunities provided.
 The authors would like to thank the Center for Nonlinear Analysis of the Carnegie Mellon University for its support.

\bibliography{BiblioPointwise}

\begin{thebibliography}{10}

\bibitem{AB2}
{\sc G.~Alberti and G.~Bellettini}, {\em A non-local anisotropic model for
  phase transitions: asymptotic behaviour of rescaled energies}, European J.
  Appl. Math., 9 (1998), pp.~261--284.

\bibitem{AFP}
{\sc L.~Ambrosio, N.~Fusco, and D.~Pallara}, {\em Functions of bounded
  variation and free discontinuity problems}, Oxford Mathematical Monographs,
  The Clarendon Press Oxford University Press, New York, 2000.

\bibitem{ACPP}
{\sc E.~Arias-Castro, B.~Pelletier, and P.~Pudlo}, {\em The normalized graph
  cut and {C}heeger constant: from discrete to continuous}, Adv. in Appl.
  Probab., 44 (2012), pp.~907--937.

\bibitem{ArCuFr09}
{\sc I.~Armend{\'a}riz, A.~Cuevas, and R.~Fraiman}, {\em Nonparametric
  estimation of boundary measures and related functionals: asymptotic results},
  Adv. in Appl. Probab., 41 (2009), pp.~311--322.

\bibitem{ConvexSets}
{\sc M.~Belkin, H.~Narayanan, and P.~Niyogi}, {\em Heat flow and a faster
  algorithm to compute the surface area of a convex body}, in FOCS '06:
  Proceedings of the 47th Annual IEEE Symposium on Foundations of Computer
  Science (FOCS'06), Washington, DC, USA, 2006, IEEE Computer Society,
  pp.~47--56.

\bibitem{bern24}
{\sc S.~Bernstein}, {\em On a modification of {C}hebyshev's inequality and of
  the error formula of {L}aplace}, Ann. Sci. Inst. Sav. Ukraine, Sect. Math, 1
  (1924), pp.~38--49.

\bibitem{BLUV12}
{\sc X.~Bresson, T.~Laurent, D.~Uminsky, and J.~H. von Brecht}, {\em
  Convergence and energy landscape for cheeger cut clustering}, in Advances in
  Neural Information Processing Systems (NIPS), P.~L. Bartlett, F.~C.~N.
  Pereira, C.~J.~C. Burges, L.~Bottou, and K.~Q. Weinberger, eds., 2012,
  pp.~1394--1402.

\bibitem{CuFrGy13}
{\sc A.~Cuevas, R.~Fraiman, and L.~Gy{\"o}rfi}, {\em Towards a universally
  consistent estimator of the {M}inkowski content}, ESAIM Probab. Stat., 17
  (2013), pp.~359--369.

\bibitem{CuFrRo07}
{\sc A.~Cuevas, R.~Fraiman, and A.~Rodr{\'{\i}}guez-Casal}, {\em A
  nonparametric approach to the estimation of lengths and surface areas}, Ann.
  Statist., 35 (2007), pp.~1031--1051.

\bibitem{GTS}
{\sc N.~Garc\'ia~Trillos and D.~Slep\v{c}ev}, {\em Continuum limit of total
  variation on point clouds}, Archive for Rational Mechanics and Analysis,
  (2015), pp.~1--49.
\newblock online first.

\bibitem{TSvBLX_jmlr}
{\sc N.~{Garc{\'i}a Trillos}, D.~Slep\v{c}ev, J.~{von Brecht}, T.~Laurent, and
  X.~Bresson}, {\em {Consistency of Cheeger and ratio graph cuts}}.
\newblock to appear in Journal of Machine Learning Research, 2015.

\bibitem{GineLatala}
{\sc E.~Gin{\'e}, R.~Lata{\l}a, and J.~Zinn}, {\em Exponential and moment
  inequalities for {$U$}-statistics}, in High dimensional probability, {II}
  ({S}eattle, {WA}, 1999), vol.~47 of Progr. Probab., Birkh\"auser Boston,
  Boston, MA, 2000, pp.~13--38.

\bibitem{Tubes}
{\sc A.~Gray}, {\em Tubes}, Progress in Mathematics, Birkh{\"a}user Basel,
  2004.

\bibitem{HagKah}
{\sc L.~Hagen and A.~Kahng}, {\em {New spectral methods for ratio cut
  partitioning and clustering.}}, IEEE Trans. Computer-Aided Design, 11 (1992),
  pp.~1074 --1085.

\bibitem{HeinSetz}
{\sc M.~Hein and S.~Setzer}, {\em Beyond spectral clustering - tight
  relaxations of balanced graph cuts}, in Advances in Neural Information
  Processing Systems (NIPS), J.~Shawe-Taylor, R.~Zemel, P.~Bartlett,
  F.~Pereira, and K.~Weinberger, eds., 2011, pp.~2366--2374.

\bibitem{hoeffding}
{\sc W.~Hoeffding}, {\em Probability inequalities for sums of bounded random
  variables}, Journal of the American statistical association, 58 (1963),
  pp.~13--30.

\bibitem{JinYuk11}
{\sc R.~Jim{\'e}nez and J.~E. Yukich}, {\em Nonparametric estimation of surface
  integrals}, Ann. Statist., 39 (2011), pp.~232--260.

\bibitem{KanVemVet04}
{\sc R.~Kannan, S.~Vempala, and A.~Vetta}, {\em On clusterings: Good, bad and
  spectral}, Journal of the ACM (JACM), 51 (2004), pp.~497--515.

\bibitem{TestingProperties}
{\sc M.~Kearns and D.~Ron}, {\em Testing problems with sublearning sample
  complexity}, Journal of Computer and System Sciences, 61 (2000), pp.~428 --
  456.

\bibitem{TheoryUStat}
{\sc V.~S. Koroljuk and Y.~V. Borovskich}, {\em Theory of {$U$}-statistics},
  vol.~273 of Mathematics and its Applications, Kluwer Academic Publishers
  Group, Dordrecht, 1994.
\newblock Translated from the 1989 Russian original by P. V. Malyshev and D. V.
  Malyshev and revised by the authors.

\bibitem{Kothari}
{\sc P.~Kothari, A.~Nayyeri, R.~O'Donnell, and C.~Wu}, {\em Testing surface
  area}, in Proceedings of the Twenty-Fifth Annual ACM-SIAM Symposium on
  Discrete Algorithms, SODA '14, SIAM, 2014, pp.~1204--1214.

\bibitem{Leoni}
{\sc G.~Leoni}, {\em A first course in {S}obolev spaces}, vol.~105 of Graduate
  Studies in Mathematics, American Mathematical Society, Providence, RI, 2009.

\bibitem{Neeman}
{\sc J.~Neeman}, {\em Testing surface area with arbitrary accuracy}, in
  Proceedings of the 46th Annual ACM Symposium on Theory of Computing, STOC
  '14, New York, NY, USA, 2014, ACM, pp.~393--397.

\bibitem{PatRod08}
{\sc B.~Pateiro-L{\'o}pez and A.~Rodr{\'{\i}}guez-Casal}, {\em Length and
  surface area estimation under smoothness restrictions}, Adv. in Appl.
  Probab., 40 (2008), pp.~348--358.

\bibitem{ShiMalik}
{\sc J.~Shi and J.~Malik}, {\em Normalized cuts and image segmentation},
  Pattern Analysis and Machine Intelligence, IEEE Transactions on, 22 (2000),
  pp.~888--905.

\bibitem{SzlamBresson}
{\sc A.~Szlam and X.~Bresson}, {\em Total variation and cheeger cuts.}, in
  ICML, J.~Faenkranz and T.~Joachims, eds., Omnipress, 2010, pp.~1039--1046.

\bibitem{vonLux_tutorial}
{\sc U.~von Luxburg}, {\em A tutorial on spectral clustering}, Statistics and
  computing, 17 (2007), pp.~395--416.

\bibitem{WeiChe89}
{\sc Y.-C. Wei and C.-K. Cheng}, {\em Towards efficient hierarchical designs by
  ratio cut partitioning}, in Computer-Aided Design, 1989. ICCAD-89. Digest of
  Technical Papers., 1989 IEEE International Conference on, IEEE, 1989,
  pp.~298--301.

\end{thebibliography}
\bibliographystyle{siam}

% \bibliography{BiblioColt}

\end{document}